\documentclass[11pt,a4paper,reqno]{amsart}

\usepackage{dynkin-diagrams}
\usepackage{kpfonts}
\usepackage[T1]{fontenc} 
\usepackage{amsmath}
\usepackage{amssymb} 
\usepackage{ulem}
\usepackage{ucs} 
\usepackage[utf8x]{inputenc}
\usepackage[]{graphicx} 
\usepackage{latexsym}
\usepackage{amsthm} 
\usepackage{stmaryrd}
\usepackage{verbatim}
\usepackage{pgf,tikz}
\usepackage{pgfkeys}
\usetikzlibrary{arrows}
\usetikzlibrary{backgrounds}
\usepackage{empheq}
\usepackage{hyperref}
\usepackage{enumitem}
\usepackage{tensor}
\usepackage{etoc}



\theoremstyle{definition}
\newtheorem{defi}{Definition}[section]

\newtheorem*{rmk}{Remark}

\theoremstyle{plain}

\newtheorem{theorem}[defi]{Theorem}
\newtheorem{question}[defi]{Question}
 
\newtheorem{lemme}[defi]{Lemma}
\newtheorem{lemma}[defi]{Lemma}
\newtheorem{prop}[defi]{Proposition}
\newtheorem{conj}[defi]{Conjecture}

\theoremstyle{remark}


\newcommand{\R}{\mathbb{R}}


\newcommand{\bbH}{\mathbb{H}}


\newcommand{\e}{\varepsilon}
\newcommand{\p}{\varphi}
\newcommand{\G}{\Gamma}



\newcommand{\cC}{\mathcal{C}}

\newcommand{\cR}{\mathcal{R}}
\newcommand{\cS}{\mathcal{S}}

\newcommand{\cU}{\mathcal{U}}

\newcommand{\cX}{\mathcal{X}}



\DeclareMathOperator{\Tr}{Tr}



\newcommand{\dS}{\mathrm{dS}}


\newcommand{\Diff}{\mathrm{Diff}}


\DeclareMathOperator{\dvol}{dvol}
\DeclareMathOperator{\Div}{div}

\DeclareMathOperator{\Hess}{Hess}



\newcommand{\Gr}{\mathrm{Gr}}

\newcommand{\intff}[2]{\mathopen{[}#1\,,#2\mathclose{]}}

\newcommand{\set}[2]{\left\lbrace #1 \,\middle|\, #2 \right\rbrace}

\newcommand{\pscal}[2]{\left\langle  #1 \middle| #2\right\rangle}

\renewcommand{\tilde}{\widetilde}

\newcommand{\norm}[1]{\left\Vert #1 \right\Vert}

\newcommand{\map}[4]{\left\lbrace \begin{array}{ccc} #1 & \to & #2 \\ #3 & \mapsto & #4 \end{array} \right.}



\newcommand{\Cauchy}{\mathfrak{C}}

\title[Geometry of the space of Cauchy hypersurfaces]{The Riemannian geometry of the space of compact spacelike Cauchy hypersurfaces} 
\author{Daniel Monclair}
\thanks{The author was partially supported by the ANR JCJC grant GAPR (ANR-22-CE40-0001, Geometry and Analysis in the Pseudo-Riemannian setting).}

\begin{document}

\normalem

\begin{abstract}
We study the geometry of a weak Riemannian metric on the infinite dimensional manifold of compact spacelike Cauchy hypersurfaces in a globally hyperbolic spacetime. We show that the geodesic distance (i.e. the infimum of lengths of paths between two given points) is positive, and that the sectional curvature is well defined and non-positive.
\end{abstract}

\maketitle

\section{Introduction}

A globally hyperbolic spacetime $(M,g)$ splits as a product $S\times \R$ in which the metric reads as $-\beta dt^2+g_t$ where $\beta:S\times\R\to\R$ is a smooth positive function and $(g_t)_{t\in\R}$ is a family of Riemannian metrics on the submanifold $S\subset M$, called a Cauchy hypersurface \cite{geroch,BS_surface_lisse,BS_splitting}. However, there is no canonical choice of a Cauchy hypersurface. In some cases, there are some preferred choices (e.g. constant mean curvature hypersurfaces, or level sets of the cosmological time), but they do not exist in an arbitrary globally hyperbolic spacetime.  In this paper we propose to study the space of all spacelike Cauchy hypersurfaces $\Cauchy(M,g)$ in a spatially compact spacetime, i.e. a globally hyperbolic spacetime whose Cauchy hypersurfaces are compact. We hope that this can eventually serve as an alternative to the choice of a special Cauchy hypersurface.

The space $\Cauchy(M,g)$ inherits the structure of an infinite dimensional manifold from a more general construction on the space of a submanifolds of a given manifold \cite{convenient,hamilton_nash_moser}, and  the tangent space $T_S\Cauchy(M,g)$ at $S\in \Cauchy(M,g)$ can be identified with $\cC^\infty(S)$. This allows us to define a Riemannian metric on $\Cauchy(M,g)$ by considering the inner product in $L^2(S)$, which we refer to as the $L^2$-metric on $\Cauchy(M,g)$. Our main focus will be the Riemannian geometry of the $L^2$-metric on $\Cauchy(M,g)$.

 One has to be careful in the study of infinite dimensional Riemannian manifolds, as many of the classic results fail in this generalization. For example, one can still define the geodesic distance between two points as the infimum of lengths of curves, but it may fail to be a distance because it could vanish. This is known to be the case when studying a similar $L^2$-metric on the space of submanifolds of a Riemannian manifold \cite{vanishing_distance}. In our case, we do get a distance.

\begin{theorem} \label{mainthm_distance}
Let $(M,g)$ be a spatially compact spacetime. The geodesic distance of the $L^2$-metric on $\Cauchy(M,g)$ is a positive distance.
\end{theorem}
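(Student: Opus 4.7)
The plan is to construct, for any pair of distinct Cauchy hypersurfaces $S_0 \neq S_1$, a real-valued functional on $\Cauchy(M,g)$ that separates $S_0$ and $S_1$ and is Lipschitz with respect to the geodesic distance; this immediately forces $d(S_0, S_1) > 0$. Given a smooth function $\phi: M \to \R$ with compact support $K$, I would consider
$$I^\phi(S) := \int_{I^+(S)} \phi\, d\mathrm{vol}_g,$$
well defined since $K$ is compact. When $S_0 \neq S_1$, the symmetric difference $I^+(S_0) \triangle I^+(S_1)$ contains a nonempty open set: picking $p \in S_0 \setminus S_1$, one has $p \in I^+(S_1) \cup I^-(S_1)$, say $p \in I^+(S_1)$; then points just past-ward of $p$ relative to $S_0$ lie in the nonempty open set $I^-(S_0) \cap I^+(S_1)$. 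Choosing $\phi \geq 0$ supported inside that component with $\phi \not\equiv 0$ ensures $I^\phi(S_0) \neq I^\phi(S_1)$.

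Next, along a smooth path $(S_s)_{s \in [0,1]}$ with velocity $h_s \in \cC^\infty(S_s) \simeq T_{S_s}\Cauchy(M,g)$, differentiation of the moving-domain integral gives the boundary flux
$$\left|\frac{d}{ds}I^\phi(S_s)\right| = \left|\int_{S_s} \phi \cdot h_s\, d\mathrm{vol}_{S_s}\right| \leq \|\phi|_{S_s}\|_{L^2(S_s)} \cdot \|h_s\|_{L^2(S_s)}$$
by Cauchy--Schwarz, the last factor being exactly the norm of $\dot S_s$ in the $L^2$-metric.

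The heart of the argument is then a uniform upper bound on $\mathrm{vol}_S(S \cap K)$ valid for every spacelike Cauchy hypersurface $S$. This is the crucial Lorentzian input: in a Bernal--S\'anchez splitting $M \simeq \Sigma \times \R$ with $g = -\beta\, dt^2 + g_t$, every spacelike Cauchy hypersurface is the graph of a smooth function $u: \Sigma \to \R$ with $\beta\,|du|_{g_u}^2 < 1$, and its induced volume form in the $\Sigma$-parametrization is $\sqrt{(1-\beta|du|^2)\det g_u}\, dx$, pointwise dominated by $\sqrt{\det g_u}\, dx$. Since $(x, u(x)) \in K$ forces $x$ into the compact projection $\pi_\Sigma(K)$ and $u(x)$ into a bounded $t$-interval, the density $\sqrt{\det g_u}$ is uniformly bounded on that region, yielding $\mathrm{vol}_S(S \cap K) \leq C_K$, and hence $\|\phi|_{S_s}\|_{L^2(S_s)} \leq \|\phi\|_\infty \sqrt{C_K}$ uniformly in $s$.

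Integrating along any path $\gamma$ from $S_0$ to $S_1$ yields $|I^\phi(S_1) - I^\phi(S_0)| \leq \|\phi\|_\infty \sqrt{C_K}\, L(\gamma)$, so
$$d(S_0, S_1) \geq \frac{|I^\phi(S_1) - I^\phi(S_0)|}{\|\phi\|_\infty \sqrt{C_K}} > 0.$$
The main obstacle is the uniform area bound in the previous paragraph: it is precisely where the Lorentzian setting diverges from the Riemannian one, in which a wiggly perturbation makes submanifolds sweep arbitrarily large area in a compact region and thereby collapses the analogous $L^2$ geodesic distance (cf.\ \cite{vanishing_distance}).
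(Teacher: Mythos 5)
Your proof is correct, but it follows a genuinely different route from the paper. The paper works in the global graph chart and first proves a refinement of the Bernal--S\'anchez splitting (its Proposition on the lapse function): after reparametrizing time by an ODE argument one can arrange $\beta\sqrt{\det(h^{-1}g_t)}\geq 1$ for a fixed background metric $h$ on $\Sigma$, whence the coordinate expression of the $L^2$-metric satisfies $G_f(u,u)\geq \int_\Sigma u^2\,\dvol_h$; lengths of paths in $\Cauchy(M,g)$ are then bounded below by lengths in the fixed Hilbert space $L^2(\Sigma,\dvol_h)$, giving the explicit bound $d(S_0,S_1)\geq \Vert f_0-f_1\Vert_{L^2(\Sigma,\dvol_h)}$. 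You instead build, for each pair $S_0\neq S_1$, a separating functional $I^\phi(S)=\int_{I^+(S)}\phi\,\dvol_g$ and show it is Lipschitz for the geodesic distance: the flux formula $\frac{d}{ds}I^\phi(S_s)=\pm\int_{S_s}\phi\,h_s\,\dvol_{S_s}$ is indeed correct (one checks it in the graph chart, where $h_s=\sqrt{\beta_f}F_f\dot f_s$ and $\dvol_{S_s}=F_f^{-1}\dvol_{g_f}$, so the product reduces to $\sqrt{\beta_f}\,\dot f_s\,\dvol_{g_f}$, matching the derivative of the moving-domain integral), and your uniform bound on $\vol(S\cap K)$ over all spacelike Cauchy hypersurfaces is valid because the induced volume element $\sqrt{1-\beta\Vert df\Vert^2_{g_f}}\,\dvol_{g_f}$ is dominated by $\dvol_{g_f}$, which is uniformly controlled on $\Sigma\times[a,b]$ by compactness. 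The trade-off: your argument avoids the lapse-normalization proposition entirely and isolates the genuinely Lorentzian mechanism (spacelike graphs cannot oscillate to create area in a compact region, which is exactly what kills the Michor--Mumford vanishing phenomenon), but the lower bound it produces depends on the chosen test function $\phi$ and is only qualitative; the paper's argument is more quantitative, bounding the geodesic distance below by a fixed Hilbert-space norm of the difference of graph functions, which also makes transparent which topology the distance dominates.
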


Note however that this distance will not define the manifold topology on $\Cauchy(M,g)$. One of the explanations of the vanishing of the geodesic distance in the Riemannian case by Michor and Mumford \cite{vanishing_distance} is that the $L^2$-metric they consider has unbounded positive sectional curvature, so one can construct arbitrarily short paths by going through arbitrarily strongly curved regions. This strong curvature corresponds to rapidly oscillating deformations of submanifolds, and an equivalent construction in our Lorentzian setting is not possible because such deformation of spacelike submanifolds would no longer be spacelike.

In the Lorentzian case, one should not expect positive but rather negative curvature, as suggested by the example of de Sitter spacetime $\dS^n$. Considering the standard bilinear form $\pscal{\cdot}{\cdot}_{n,1}$ of signature $(n,1)$ on $\R^{n+1}$, one can define the   de Sitter spacetime
 \[\dS^n=\set{x\in \R^{n,1}}{\pscal{x}{x}_{n+1}=1}\]
 
  as well as the real hyperbolic space
  \[\bbH^n=\set{x\in\R^{n+1}}{\pscal{x}{x}_{n,1}=-1,~x_0>0}.\]

There is a natural embedding $\bbH^n\to\Cauchy(\dS^n)$ defined by $x\mapsto x^\perp\cap\dS^n$ which corresponds to  totally geodesic Cauchy hypersurfaces. This embedding is not only isometric, but actually totally geodesic, thus implying that there is some negative curvature in $\Cauchy(\dS^n)$. We will prove that the sectional curvature is always non positive.

\begin{theorem} \label{mainthm_curvature}
Let $(M,g)$ be a spatially compact spacetime. The sectional curvature of the $L^2$-metric on $\Cauchy(M,g)$ is non positive. More precisely, for $S\in\Cauchy(M,g)$ and an orthonormal pair  $u,v\in \cC^\infty(S)=T_S\Cauchy(M,g)$, the sectional curvature of the 2-plane spanned by $u$ and $v$ is:
\[ -\frac{1}{2}\int_S \norm{u\vec\nabla v-v\vec\nabla u}^2\dvol_S\]
where the gradient $\vec\nabla$, the norm $\norm{\cdot}$ and the measure $\dvol_S$ are considered with respect to the induced Riemannian metric on $S$.

\end{theorem}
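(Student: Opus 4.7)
The plan is to compute the Riemann tensor via a direct Koszul-formula calculation in a chart around $S$, and then to identify the intrinsic residue by an algebraic cancellation.

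I would introduce Gauss normal coordinates in a tubular neighborhood of $S$, writing the Lorentzian metric as $-dt^2 + g_t$ where $g_0$ is the induced metric on $S$. A small $f \in \cC^\infty(S)$ parametrizes the graph hypersurface $S_f = \{(p, f(p))\}$, giving a chart on a neighborhood of $S$ in $\Cauchy(M, g)$. Any tangent vector $u\in\cC^\infty(S) = T_S\Cauchy$ extends to the \emph{constant} vector field $U\equiv u$ in this chart; for constant extensions of $u,v,w$ the Lie brackets all vanish. A direct computation rewrites the pulled-back metric as $G_f(h_1, h_2) = \int_S h_1h_2\,\mu(f)\,\dvol_S$ with
\[\mu(f) = \frac{\sqrt{\det g_t/\det g_0}\,\big|_{t=f}}{\sqrt{1-|\vec\nabla f|^2}},\]
and the classical first-variation-of-volume formula gives $D_h\mu|_{f=0} = -Hh$, where $H$ is the mean curvature of $S$ with respect to the future unit normal. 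The Koszul formula at $S$ then yields
\[2G_0(\nabla_UV, W) = -\int_S Huvw\,\dvol_S,\qquad \nabla_UV|_S = -\tfrac12 Huv\in\cC^\infty(S),\]
and analogously $\nabla_UU|_S = -\tfrac12Hu^2$, $\nabla_VV|_S = -\tfrac12Hv^2$.

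Using metric compatibility, torsion-freeness and $[U,V]=0$, one obtains
\[\langle R(U,V)V, U\rangle\big|_S = U\langle\nabla_VV, U\rangle|_S - V\langle\nabla_UV, U\rangle|_S - \langle\nabla_VV,\nabla_UU\rangle|_S + \|\nabla_UV\|_S^2.\]
The last two terms each equal $\tfrac14\int_S H^2u^2v^2\,\dvol_S$ and cancel, leaving a difference of two directional derivatives. To compute $U\langle\nabla_VV, U\rangle|_S$ I would expand $\nabla_VV|_f$ to first order in $f = \e u$ by applying Koszul at $f$: this requires the linearization of mean curvature (the Jacobi-type operator $\Delta_S u + (\Ric(N,N)+|II|^2)u$), the first variation of the volume form, and the first variation of $\mu$ including its dependence on $\vec\nabla f$ (which was invisible at $f=0$). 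All contributions coming from ambient and extrinsic geometry then appear with coefficients symmetric in $u \leftrightarrow v$ (they multiply factors like $u^2v^2$ or $H u v(u+v)$), and hence disappear in the antisymmetric difference $U\langle\cdot\rangle - V\langle\cdot\rangle$. What remains is purely intrinsic; using the Leibniz rule for $\Delta_S$ and integration by parts on the closed manifold $S$, this residue can be rewritten as $-\tfrac12\int_S\|u\vec\nabla v - v\vec\nabla u\|^2\,\dvol_S$, which for an orthonormal pair is the sectional curvature.

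The main obstacle is verifying this cancellation. The intrinsic character of the final formula is not formal: each of $U\langle\nabla_VV, U\rangle|_S$ and $V\langle\nabla_UV, U\rangle|_S$, taken in isolation, involves the second fundamental form, the normal Ricci curvature, and derivatives of $H$, none of which survive in the statement. Carefully organizing the second-order variation of $\mu$ (both in $f$ and in $\vec\nabla f$), combining it with the Jacobi-type linearization of $H_f$, and tracking which terms are $u\leftrightarrow v$ symmetric is the technical heart of the proof.
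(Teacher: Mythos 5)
Your proposal is correct and is essentially the paper's own approach: a graph chart adapted to an orthogonal splitting (your Gauss-gauge chart is just the local, unit-lapse case of the Bernal--S\'anchez splitting used here), the Koszul formula for the weak $L^2$-metric, evaluation of the curvature only at the centre $f=0$ of the chart, cancellation of all extrinsic contributions in the antisymmetrized combination (exactly the content of Lemmas \ref{lem - differential phi_1}--\ref{lem - differential phi_3} and Proposition \ref{prop - differential phi}), and a final integration by parts on the closed hypersurface. The only step you leave implicit is that expanding $\nabla_VV$ at nearby graphs via Koszul presupposes that the Koszul functional is representable by a smooth function there (existence of the Levi-Civita connection for this weak metric), which is secured by the same Green's-formula integration by parts carried out in Section \ref{sec: Levi-Civita connection} and which your computation would have to include along the way.
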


Before we can compute any curvature, we need to show that it is well defined, as infinite dimensional Riemannian manifolds may fail to have a Levi-Civita connection. Although it would be possible to adapt the computations in \cite{vanishing_distance} to our case, we propose a completely different approach using orthogonal splittings of globally hyperbolic spacetimes. In particular, we will see that the abstract setting of infinite dimensional manifolds is not so important as we have a nice global chart.

\vspace{\baselineskip}

 Many elementary questions remain to be solved for this $L^2$-metric. One worth mentioning, even though we do not treat it in this paper, is geodesics. The geodesic equation of an infinite dimensional Riemannian manifold is a partial differential equation that does not always have solutions. The nature of the equation depends strongly on the metric, and for the $L^2$-metric on $\Cauchy(M,g)$ it seems to satisfy some very weak form of ellipticity, but not enough to fit in a  theory that guarantees (local) existence of solutions. Theorem \ref{mainthm_curvature} encourages us to believe that geodesics should be unique, as non positive curvature can be translated as convexity of the energy functional.

 \begin{conj}
 Given two spacelike Cauchy hypersurface $S_0,S_1\in \Cauchy(M,g)$, there is a unique geodesic path $c:\intff{0}{1}\to \Cauchy(M,g)$ such that $c(0)=S_0$ and $c(1)=S_1$. Moreover, its length minimizes the lengths of paths from $S_0$ to $S_1$, and it is the only only one to do so.
 \end{conj}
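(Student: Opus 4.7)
The plan is to treat the conjecture as a boundary value problem by direct variational methods, leveraging the non-positive curvature of Theorem \ref{mainthm_curvature} for uniqueness. First, I would set up a global chart: fix a smooth reference Cauchy hypersurface $S$ and use the Bernal-S\'anchez splitting to write $M\cong S\times\bbR$ with metric $-\beta\,dt^2+g_t$, so that every spacelike Cauchy hypersurface is the graph of a function $f\in\cC^\infty(S)$ whose graph is spacelike. This identifies $\Cauchy(M,g)$ with an open subset $\cU\subset\cC^\infty(S)$, and a path $c:\intff{0}{1}\to\Cauchy(M,g)$ with a map $F:\intff{0}{1}\times S\to\bbR$ such that $F(t,\cdot)\in\cU$. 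In this chart the energy $E(c)=\int_0^1\norm{\dot c(t)}_{L^2(S_t)}^2\,dt$ becomes a concrete first-order integral in $F$.

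For existence I would apply the direct method of the calculus of variations: take a minimizing sequence $(F^{(n)})$ of paths from $f_0$ to $f_1$, extract a subsequence converging weakly in a suitable Bochner-Sobolev space (say, $H^1(\intff{0}{1};H^k(S))$ with $k$ large enough that $H^k(S)\hookrightarrow\cC^0(S)$), and pass to the limit via lower semicontinuity of $E$. Standard regularity bootstrapping from the Euler-Lagrange equations should then upgrade the weak minimizer to a smooth geodesic.

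For uniqueness I would exploit non-positive curvature through Jacobi fields. In a Riemannian manifold of non-positive sectional curvature, the function $t\mapsto\norm{J(t)}^2$ along a Jacobi field $J$ satisfies $(\norm{J}^2)''\geq 2\norm{J'}^2\geq 0$, hence is convex; combined with $J(0)=J(1)=0$ this forces $J\equiv 0$. The argument is purely pointwise in $t$ and transfers to our infinite-dimensional setting once the Levi-Civita connection and Riemann tensor are available, which they are by the proof of Theorem \ref{mainthm_curvature}. It rules out non-trivial variations of geodesics with fixed endpoints, hence yields local uniqueness; the passage to global uniqueness should follow from the contractibility of the space of paths from $S_0$ to $S_1$ by a standard non-positive curvature argument, and the length-minimization from the second variation of the length functional (which is positive semi-definite modulo reparametrizations by Theorem \ref{mainthm_curvature}, hence makes any geodesic a strict local length minimum).

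The main obstacle I anticipate is preventing minimizing sequences from escaping the open subset $\cU$: near $\partial\cU$ the graph becomes tangent to the null cone, the induced Riemannian volume $\dvol_{S_t}$ degenerates, and a priori very little $L^2$-energy may be needed to drift toward the boundary. Establishing a quantitative energy-vs-distance-to-boundary estimate, exploiting the specific Lorentzian geometry (lapse $\beta$, second fundamental forms of Cauchy hypersurfaces), is the step I expect to be the most delicate, and the one where the Lorentzian structure intervenes in an essential way as opposed to the purely analytic framework of infinite-dimensional Riemannian geometry.
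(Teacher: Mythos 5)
The statement you are addressing is stated in the paper as a \emph{conjecture}: the paper explicitly does not treat geodesics, noting that the geodesic equation of this weak $L^2$-metric has only ``a very weak form of ellipticity, but not enough to fit in a theory that guarantees (local) existence of solutions.'' So there is no proof in the paper to compare with, and your proposal must stand on its own; as it is written, it has genuine gaps at each of its three main steps. The most serious one is coercivity in the direct method: the energy $E(c)=\int_0^1 G_{c(s)}(\dot c(s),\dot c(s))\,ds$ controls only an $L^2(\Sigma)$-type norm of $\partial_s F$ and gives \emph{no} control whatsoever on spatial derivatives of $F(s,\cdot)$, so a minimizing sequence is not bounded in $H^1(\intff{0}{1};H^k(\Sigma))$ for any $k\geq 1$ and no such weak limit can be extracted. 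The natural limit objects live in the metric completion, which (as the paper points out) contains lightlike and lower-regularity hypersurfaces: there exist paths of finite, even small, length converging to lightlike hypersurfaces, so the ``energy-vs-distance-to-boundary estimate'' you defer to the end is not a technical refinement but the crux, and in the form you need it is plausibly false without further ideas. Likewise, ``standard regularity bootstrapping from the Euler--Lagrange equations'' is not available: the geodesic equation here is not elliptic, and for weak Riemannian metrics the spray typically loses derivatives, so even \emph{local} existence is open --- which is precisely why the statement is a conjecture.

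The uniqueness step has the same foundational problem. The Jacobi-field convexity argument presupposes that geodesics and Jacobi fields exist, that the second variation formula and an exponential map are available, and (for the passage from ``no conjugate points'' to global uniqueness and minimization) a Cartan--Hadamard-type argument, which requires completeness. Here the geodesic distance is incomplete, its topology is strictly finer-grained than what the manifold topology sees (the paper stresses the two topologies differ), and none of the exponential-map machinery has been established; non-positive sectional curvature of a weak metric does not by itself yield convexity of the energy along arbitrary variations between two given points. What your plan would buy, if the analytic gaps were filled, is an attractive reduction of the conjecture to a quantitative non-degeneracy estimate near $\partial\cU$ plus a weak-topology existence theory in the completion; but as stated, existence, regularity, and global uniqueness are each assumed rather than proved, so the proposal does not settle the conjecture.
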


In a finite dimensional Riemannian manifold, non-positive sectional curvature has a strong consequence on the Riemannian distance, namely the $\rm{CAT}(0)$-inequality \cite[p.173]{bridson_haefliger}. This relationship between the curvature and the Riemannian distance is best understood through the behaviour of geodesics. This is still out of reach in our setting because we not only need to have the existence of geodesics in the Riemannian sense, but also prove that they are geodesics in the metric sense.  One can still ask if a metric space is $\rm{CAT}(0)$ without using geodesics thanks to the 4-point condition \cite[p.164]{bridson_haefliger}, which requires metric completeness in order to be applied.

\begin{question}
Is  $\Cauchy(M,g)$ (or its metric completion) a $\rm{CAT}(0)$-space?
\end{question}

Of course the geodesic distance associated to the $L^2$-metric is not complete. For once, if we start with a spatially compact spacetime $(M,g)$ and consider a spatially compact open domain $U\subset M$, then $\Cauchy(U,g)$ is an open set of $\Cauchy(M,g)$ with the same Riemannian metric. But the lack of completeness has other reasons: one can easily construct paths of finite length converging to some lightlike hypersurface. The metric completion of $\Cauchy(M,g)$ should also include some hypersurfaces of lower regularity.

One of the motivations for this question is Lorentzian dynamics. Indeed, if a group $G$ acts isometrically on a spatially compact spacetime $(M,g)$, then it also acts on $\Cauchy(M,g)$ and its metric completion $X$. If the action of $G$ on $M$ is non proper, it cannot preserve a spacelike Cauchy hypersurface. However   $G$ also acts on the visual boundary $\partial_\infty X$, and one can look for fixed points there. One can expect such a fixed point to be some kind of lightlike hypersurface, and if preserved by enough isometries it should be totally geodesic. Totally geodesic lightlike hypersurfaces play a central role in the work of Zeghib \cite{Z99a,Z99b} on the classification of non compact connected Lie groups acting on compact Lorentzian manifolds. A classification of groups acting non properly on two dimensional spatially compact spacetimes was obtained in \cite{isometries_lorentz_surfaces}. The action on the space of spacelike Cauchy hypersurfaces could potentially help to extend this classification to higher dimensions.

\vspace{\baselineskip}

Compactness of Cauchy hypersurfaces may appear as a technical assumption at a first glance, mainly being required so that the $L^2$ metric is well defined. This could be bypassed by using the convenient setting of \cite{convenient}, as for any globally hyperbolic spacetime $(M,g)$ the space of spacelike Cauchy hypersurfaces $\Cauchy(M,g)$ carries the structure of an infinite dimensional manifold for which the tangent space $T_S\Cauchy(M,g)$ identifies with the space $\cC_c^\infty(S)$ of compactly supported smooth functions on $S$. The problem in this case is that the topology obtained on $\Cauchy(M,g)$ is not connected, so the Riemannian distance is infinite.

Another reason why this formalism is probably not adapted to the case of non compact Cauchy hypersurfaces is the example of totally geodesic Cauchy hypersurfaces in Minkowski spacetime. Indeed, the natural geometry on this space, called co-Minkowski space, is a degenerate (sub-Riemannian) metric, see e.g. \cite{co_minkowski}.

\subsection*{Outline}

The manifold structure and the Riemannian metric on the space $\Cauchy(M,g)$ of spacelike Cauchy hypersurfaces are presented in section \ref{sec: manifold of Cauchy hypersurfaces}, starting with all the necessary notions from Lorentzian geometry and concluding with a proof of Theorem \ref{mainthm_distance}. We then compute the Levi-Civita connection of $\Cauchy(M,g)$ in section \ref{sec: Levi-Civita connection}, thus proving its existence, and finally compute the curvature in section \ref{sec: curvature}.

\section{The manifold of Cauchy hypersurfaces} \label{sec: manifold of Cauchy hypersurfaces}

\subsection{Basic Lorentzian vocabulary}
In this paper a Lorentzian manifold $(M,g)$ is  $\cC^\infty$ and has  signature $(-,+,\dots,+)$.  A non zero  tangent vector $v\in T_xM$ is called \textbf{causal} if $g_x(v,v)\leq 0$.  A \textbf{time orientation} of $(M,g)$ is  a continuous choice of a connected component of the set of  causal tangent vectors at a point\footnote{Formally, it is an equivalence class of non vanishing causal vector fields under the identification $T_1\sim T_2$ if $g_x(T_1(x),T_2(x))\leq 0$ for all $x\in M$.}. We will call a time oriented Lorentzian manifold a \textbf{spacetime}. It allows us to define \textbf{future} (resp. \textbf{past}) \textbf{directed} causal vectors as vectors in the chosen component (resp. in the other component).

  A \textbf{future  directed curve} is a smooth curve $c:I\to M$ such that $\dot c(t)$ is future directed for all $t\in I$. It is called \textbf{inextensible} if it is not the reparametrisation of a future directed curve defined on a larger interval.

\vspace{\baselineskip}

  A \textbf{Cauchy hypersurface} is a topological hypersurface $S\subset M$ such that any inextensible future directed curve intersects $S$ at exactly one point.  A spacetime  is called \textbf{globally hyperbolic} if it possesses a Cauchy hypersurface, and \textbf{spatially compact} if it possesses a compact Cauchy hypersurface.

 A  \textbf{spacelike hypersurface} is a smooth hypersurface $S\subset M$ such that induced metric $g\vert_S$ has Riemannian signature. The space $\Cauchy(M,g)$ under study is the set of Cauchy hypersurfaces of $M$ that are spacelike.
 
 \vspace{\baselineskip}

 The two notions of a Cauchy hypersurface and a spacelike hypersurface can look very similar at a first glance, but they are not the same.  Not all Cauchy hypersurfaces are  spacelike: they may not be smooth, but they can also be smooth with some points at which the induced metric is degenerate. In an arbitrary Lorentzian manifold, spacelike hypersurfaces are not necessarily Cauchy hypersurfaces (they can exist in non globally hyperbolic Lorentzian manifolds).
 
 In the globally hyperbolic case however, it was proved in \cite{BS_splitting} that every compact spacelike hypersurface with boundary is included in a spacelike Cauchy hypersurface. Assuming spatial compactness, we get the following result.



\begin{theorem}[{\cite[Theorem 1.1]{BS_splitting}}] \label{thm - spacelike hypersurfaces are Cauchy}
Let $(M,g)$ be a spatially compact spacetime. Any smooth closed spacelike hypersurface of $M$ is a Cauchy hypersurface.
\end{theorem}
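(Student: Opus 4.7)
The plan is to deduce this from the extension result highlighted just above the theorem: in a globally hyperbolic spacetime, every compact spacelike hypersurface with (possibly empty) boundary is contained in a smooth spacelike Cauchy hypersurface. Applying this to our $S$, whose boundary happens to be empty, would produce a smooth spacelike Cauchy hypersurface $\tilde S\subset M$ with $S\subset\tilde S$, and the whole task reduces to upgrading the inclusion $S\subset\tilde S$ to an equality.

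For that upgrade I would first observe that $S$ and $\tilde S$ are both smooth embedded submanifolds of $M$ of the same dimension $\dim M-1$, so the inclusion $S\hookrightarrow\tilde S$ is automatically an open embedding; in particular, $S$ is open in $\tilde S$. Since $S$ is compact and $\tilde S$ is Hausdorff, $S$ is also closed in $\tilde S$. Hence $S$ is a union of connected components of $\tilde S$, and the only remaining point is that $\tilde S$ is connected. This is where spatial compactness enters the picture: by the Bernal--S\'anchez splitting, $M\cong\R\times\Sigma$ for some compact Cauchy hypersurface $\Sigma$, and $\Sigma$ is connected because $M$ is. Since any two smooth spacelike Cauchy hypersurfaces in a globally hyperbolic spacetime are diffeomorphic (the projection along the $\R$-factor of any such splitting realises the diffeomorphism), $\tilde S$ is diffeomorphic to $\Sigma$ and therefore connected. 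This forces $S=\tilde S$, and $S$ is a Cauchy hypersurface.

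The only real obstacle in this scheme is the extension statement itself, which is the content of \cite{BS_splitting}. A self-contained approach would instead start from the splitting $M\cong\R\times\Sigma$ and try to show directly that the projection $\pi\vert_S:S\to\Sigma$ along the integral curves of $\partial_t$ is a diffeomorphism. Since $S$ is spacelike, $\partial_t\notin T_pS$ at every $p\in S$, so $\pi\vert_S$ is a local diffeomorphism; the compactness of both $S$ and $\Sigma$ promotes it to a finite covering, and connectedness of $\Sigma$ reduces everything to showing that $\pi\vert_S$ is injective, i.e.\ that $S$ meets each integral curve of $\partial_t$ at most once. This last injectivity, which amounts to the achronality of a closed spacelike hypersurface in a globally hyperbolic spacetime, is the delicate analytic point and is precisely the heart of the proof in \cite{BS_splitting}; it is also where global hyperbolicity (absence of closed causal curves and the behaviour of the causal structure with respect to the temporal function) is genuinely used.
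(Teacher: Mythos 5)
You should first note that the paper does not prove this statement at all: it is quoted verbatim from Bernal--S\'anchez (the citation [BS06, Theorem~1.1]), so the only question is whether your argument closes it independently, and it does not. The gap is in the first step of your main route. The extension theorem of \cite{BS_splitting} is not ``every compact spacelike submanifold with boundary extends to a spacelike Cauchy hypersurface''; its hypotheses require the submanifold to be \emph{acausal} (the informal sentence in the paper just before the theorem omits this, but it cannot be dropped). Indeed, in the flat cylinder $(\mathbb{S}^1\times\R,\,-dt^2+d\theta^2)$ a spacelike arc spiralling once around the circle with slope $1/2$ contains the chronologically related points $(\theta,t)=(0,0)$ and $(0,\pi)$, hence lies in no achronal set and in particular in no Cauchy hypersurface, even though it is a compact spacelike hypersurface with boundary. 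For your closed hypersurface $S$ the acausality (or even achronality) hypothesis is exactly what is not available: showing that a closed spacelike hypersurface in a spatially compact globally hyperbolic spacetime is achronal is the whole content of the theorem, so invoking the extension result for $S$ either begs the question or relies on a false strengthening of the quoted statement.

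The remaining reductions you make are correct and standard: once an extension $S\subset\tilde S$ exists, the open--closed--connected argument does force $S=\tilde S$; and in your second route $\pi\vert_S$ is indeed a finite covering of the connected compact $\Sigma$, so everything reduces to injectivity of $\pi\vert_S$, i.e.\ to achronality of $S$ (plus the short verification, left implicit in your sketch, that a spacelike graph over a compact $\Sigma$ meets every inextensible causal curve exactly once). But you explicitly leave that achronality step unproved, and it is the heart of the matter; in the literature it is handled by a genuine argument (a covering-space argument going back to Budic--Isenberg--Lindblom--Yasskin, and the constructions of Bernal--S\'anchez), not by anything appearing in your proposal. As written, the proposal is an accurate reduction of the theorem to its crux rather than a proof of it.
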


This means that $\Cauchy(M,g)$ is just the set of closed spacelike hypersurfaces, in particular it is open in the space of all embedded hypersurfaces in $M$, and thus inherits a strucutre of an infinite dimensional manifold \cite{hamilton_nash_moser,convenient}. Note that $\Cauchy(M,g)$ is non empty thanks to \cite[Theorem 1.1]{BS_surface_lisse}.

\subsection{Orthogonal splittings}

Given a spacelike Cauchy hypersurface $\Sigma$ in a Lorentzian manifold $(M,g)$, it is easy to see that $M$ is diffeomorphic to the product $\Sigma\times \R$: the flow of a complete everywhere timelike vector field will provide such an identification. However, the metric thus obtained on $\Sigma\times \R$ is so far from being a product metric that it is not very helpful for computations. We will use a stronger result showing that such a splitting can be chosen to be orthogonal, i.e. all slices $\Sigma\times\{ t\}$ are orthogonal to $\frac{\partial\,}{\partial t}$.

\begin{theorem}[{\cite[Theorem 1.2]{BS_splitting}}] \label{thm - orthogonal splitting}
Let $(M,g)$ be a Lorentzian manifold, and $\Sigma\subset M$ a spacelike Cauchy hypersurface. There is a diffeomorphism $\p:\Sigma\times \R\to M$ with the following properties:
\begin{itemize}
\item $\p(x,0)=x$ for all $x\in \Sigma$
\item The pullback metric $\p^*g$ decomposes as $-\beta dt^2 +g_t$ where $\beta:\Sigma\times \R\to \R$ is a smooth positive function and $g_t$ is a smooth  family of Riemannian metrics on $\Sigma$.
\end{itemize}
\end{theorem}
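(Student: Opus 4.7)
The plan is to reduce the statement to the construction of an appropriate smooth \emph{Cauchy temporal function} $\tau \colon M \to \R$ having $\Sigma$ as its zero level set, and then to produce $\p$ by flowing along the timelike gradient of $\tau$.

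First I would establish the existence of a smooth $\tau \colon M \to \R$ with the following properties: $d\tau$ is past-directed timelike everywhere, each level set $\tau^{-1}(s)$ is a smooth spacelike Cauchy hypersurface, and $\tau^{-1}(0) = \Sigma$. The starting point is Geroch's theorem producing a \emph{continuous} Cauchy time function. One then smooths it into a temporal function by combining, via a locally finite partition of unity, local ``quasi-temporal'' functions built in small convex neighborhoods, in such a way that the resulting sum is still strictly increasing along future-directed curves and that its differential is timelike. To impose the further requirement that $\Sigma$ itself be a level set, one first deforms the continuous time function in a tubular neighborhood of $\Sigma$ so that it already coincides there with a smooth coordinate normal to $\Sigma$, and then performs the smoothing in a way compatible with this already-smooth function. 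This step is the true heart of the theorem and essentially packages the full strength of the Bernal--S\'anchez smoothing technique.

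Once such a $\tau$ is at hand, the rest is geometric. The metric gradient $\vec\nabla\tau$ is timelike, and one defines the future-directed timelike vector field
\[ X := \frac{-\vec\nabla\tau}{-g(\vec\nabla\tau,\vec\nabla\tau)}, \]
normalized so that $d\tau(X) = 1$. Along any maximal integral curve $c$ of $X$, the function $\tau\circ c$ grows at unit speed; since every level set $\Sigma_s = \tau^{-1}(s)$ is a Cauchy hypersurface, $c$ is inextensible and meets each $\Sigma_s$ at exactly one point. In particular the flow $(\phi^X_t)_{t\in\R}$ is complete, and setting $\p(x,t) := \phi^X_t(x)$ gives a diffeomorphism $\Sigma \times \R \to M$ satisfying $\p(x,0) = x$.

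Finally I would read off the form of $\p^*g$. Under $\p$ one has $\partial_t = X$, while a vector tangent to a slice $\Sigma \times \{t\}$ pushes forward to a vector in $\ker d\tau$, hence $g$-orthogonal to $\vec\nabla\tau$ and therefore to $X$. Consequently $\p^*g$ carries no mixed $dt\,dx^i$ terms, and one obtains
\[ \p^*g = -\beta\, dt^2 + g_t, \qquad \beta = -g(X,X) = \frac{1}{-g(\vec\nabla\tau,\vec\nabla\tau)} > 0, \]
with $g_t$ the Riemannian metric induced on the slice $\Sigma \times \{t\}$. The dominant obstacle is clearly the first step: upgrading Geroch's continuous Cauchy time function to a smooth temporal one having the \emph{prescribed} hypersurface $\Sigma$ as a level set; the subsequent orthogonalization via the gradient flow is essentially a one-line observation.
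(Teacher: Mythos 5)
This statement is not proved in the paper at all: it is quoted verbatim from Bernal--S\'anchez \cite{BS_splitting} (see also \cite{BS_surface_lisse}), so the only comparison available is with that cited source, and your outline is exactly its strategy --- first produce a smooth Cauchy temporal function $\tau$ with $\Sigma=\tau^{-1}(0)$, then split $M$ by the flow of the normalized gradient $X=-\vec\nabla\tau/(-g(\vec\nabla\tau,\vec\nabla\tau))$, the orthogonality of the slices and the formula $\beta=1/(-g(\vec\nabla\tau,\vec\nabla\tau))$ coming out as you state. Your second step (completeness of the flow and the product structure) is correct, though the logic should run: a maximal integral curve with finite right endpoint of its domain can have no endpoint in $M$, hence is future-inextensible as a causal curve, hence meets every level set $\tau^{-1}(s)$, contradicting the bound $\tau\circ c\leq \tau(c(t_0))+(b-t_0)$; inextensibility is a conclusion of this argument, not a hypothesis. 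The genuine content of the theorem, however, is the first step --- upgrading Geroch's continuous time function to a smooth Cauchy \emph{temporal} function having the prescribed $\Sigma$ as a level set --- and your two-sentence description of the partition-of-unity smoothing and the deformation near $\Sigma$ is a fair summary of the Bernal--S\'anchez technique but not a proof of it; as you acknowledge, at that point you are effectively invoking the same result the paper cites rather than giving an independent argument.
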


It is important to keep in mind that the function $\beta$, called the \emph{lapse} function, is a function of both variables $\beta:\Sigma\times \R\to \R_{>0}$. So even when $\Sigma$ is compact, it may not be bounded. This will not be relevant for curvature computations, but we will use orthogonal splittings with specific bounds on the lapse function in order to prove the positivity of the Riemannian distance.

 It was shown by M\"uller and S\'anchez \cite[Theorem 1.2]{lorentzian_nash} that one can always choose the lapse function to be bounded from above. They used this result to show that any globally hyperbolic spacetime can be isometrically embedded in the Minkowski space of large enough dimension, which can be seen as a Lorentzian analogue of the Nash Embedding Theorem in the globally hyperbolic case. 
 
 For our applications, we will need a bound from below.

\begin{prop} \label{prop - bounded lapse function}
Let $(M,g)$ be a Lorentzian manifold, $\Sigma\subset M$ a compact spacelike Cauchy hypersurface and $h$ a Riemannian metric on $\Sigma$. There is a diffeomorphism $\p:S\times I\to M$ where $I\subset\R$ is an open interval  with the following properties:
\begin{itemize}
\item $\p(x,0)=x$ for all $x\in \Sigma$
\item The pullback metric $\p^*g$ decomposes as $-\beta dt^2 +g_t$ where $\beta:\Sigma\times I\to \R$ is a smooth positive function and $g_t$ is a smooth  family of Riemannian metrics on $\Sigma$.
\item $\beta(x,t) \sqrt{ \det\left(h_x^{-1}(g_t)_x\right)}\geq 1$ for all $x\in \Sigma$ and $t\in I$.
\end{itemize}
\end{prop}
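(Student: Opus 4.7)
The plan is to apply Theorem \ref{thm - orthogonal splitting} to obtain an initial orthogonal splitting $\p_0:\Sigma\times\R\to M$ with $\p_0^*g = -\beta_0\,dt^2 + g^0_t$, and then to compose with a time reparametrization $\tilde\p(x,s) = \p_0(x,f(s))$ chosen so the new lapse becomes large enough. The reparametrization must depend only on $s$: replacing $f(s)$ with a function $f(x,s)$ would introduce an off-diagonal cross term $-f_s f_{x^i}\beta_0$ between $\partial_s$ and $\partial_{x^i}$, so orthogonality forces $f$ to be independent of $x$, leaving a single scalar degree of freedom.

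A direct computation then gives $\tilde\p^*g = -(f'(s))^2\beta_0(x,f(s))\,ds^2 + g^0_{f(s)}$, so the new lapse is $\tilde\beta(x,s) = (f'(s))^2\beta_0(x,f(s))$ and the new spatial metric is $\tilde g_s = g^0_{f(s)}$. Writing $\rho(x,t) = \sqrt{\det(h_x^{-1}(g^0_t)_x)}$, the target inequality becomes
\[(f'(s))^2\,\beta_0(x,f(s))\,\rho(x,f(s)) \geq 1\quad\text{for all }x\in\Sigma,\ s\in I.\]
Setting $A(t) = \max_{x\in\Sigma} 1/(\beta_0(x,t)\rho(x,t))$ yields a continuous positive function on $\R$ by compactness of $\Sigma$, and the inequality further reduces to $(f'(s))^2\geq A(f(s))$.

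To build such an $f$, I would pick a smooth positive $F:\R\to\R$ with $F\geq\sqrt{A}$ pointwise---such a majorant exists by a standard partition-of-unity smoothing---and set $G(u) = \int_0^u dr/F(r)$. Then $G$ is a smooth increasing diffeomorphism from $\R$ onto an open interval $I\ni 0$, and its inverse $f=G^{-1}:I\to\R$ satisfies $f'(s) = F(f(s))\geq\sqrt{A(f(s))}$. The composition $\tilde\p = \p_0\circ(\Id_\Sigma\times f)$ is then a diffeomorphism $\Sigma\times I\to M$ with $\tilde\p(x,0)=x$ and all the required bounds. The main subtlety is that $I$ may be a proper subinterval of $\R$, which happens precisely when the improper integral $\int_0^{\pm\infty}dr/F(r)$ converges (something that can be forced on us if $A$ grows too fast at infinity); this is however compatible with the statement of the proposition, which explicitly allows $I$ to be any open interval.
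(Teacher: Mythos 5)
Your proposal is correct and follows essentially the same route as the paper: start from the orthogonal splitting of Theorem \ref{thm - orthogonal splitting}, reduce the desired bound to a differential inequality $(f')^2\geq A(f)$ for a time reparametrization depending on $t$ alone, and produce $f$ via a smooth majorant $F\geq\sqrt{A}$ (the paper writes this as $F\geq 1/m$ with $m=\min_x(\beta\rho)^{1/2}$, which is the same quantity). The only cosmetic difference is that you obtain $f$ as the inverse of $u\mapsto\int_0^u dr/F(r)$, whereas the paper takes the maximal solution of $f'=F(f)$ and imposes the extra condition $F\geq 1$ to guarantee that $f$ maps its interval of definition onto all of $\R$; your construction achieves this surjectivity automatically.
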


\begin{proof}
Consider the decomposition given by Theorem \ref{thm - orthogonal splitting}, and consider the function $m:\R\to \R$ defined by:
 \[\forall t\in\R\quad m(t)=\min_{x\in \Sigma}\left(\beta(x,t) \sqrt{ \det\left(h_x^{-1}(g_t)_x\right)}\right)^{\frac{1}{2}}\]
 
  Note that this function is positive and continuous, but not necessarily smooth. We can however consider a smooth function $F:\R\to \R$ such that $F(t)\geq \max( 1,1/m(t) )$ for all $t\in \R$. Now let $f:I\to \R$ be the maximal solution of the ordinary differential equation $f'=F(f)$ with inital data $f(0)=0$. The condition $F\geq 1$ guarantees that $f$ is a diffeomorphism from $I$ to $\R$.\\
 When pulling back the metric on $\Sigma\times \R$ to $\Sigma\times I$ by the diffeomorphism $(x,t)\mapsto (x,f(t))$, the metric reads as $-\tilde \beta(x,t)dt^2+\tilde g_t$ where the new lapse function $\tilde \beta$ is given by $\tilde \beta(x,t)=\beta(x,f(t))f'(t)^2$ and the family of Riemannian metrics $\tilde g_t$ on $\Sigma$ is given by $(\tilde g_t)_x=(g_{f(t)})_x$.
 
 \begin{align*}
 \tilde\beta(x,t) \sqrt{ \det\left(h_x^{-1}(\tilde g_t)_x\right)} & = \beta(x,f(t))f'(t)^2 \sqrt{ \det\left(h_x^{-1}(g_{f(t)})_x\right)}\\
 &=F( f(t))^2\beta(x,f(t)) \sqrt{ \det\left(h_x^{-1}(g_{f(t)})_x\right)}\\
  &\geq\frac{1}{m(f(t))^2}\beta(x,f(t)) \sqrt{ \det\left(h_x^{-1}(g_{f(t)})_x\right)}\\
 &\geq 1
 \end{align*}
 
\end{proof}

\subsection{The intrinsic manifold structure of \texorpdfstring{$\Cauchy(M,g)$}{C(M,g)}}
Given a manifold $M$ and a compact manifold $\Sigma$, the space $\cS(\Sigma,M)$ of all submanifolds of $M$ that are diffeomorphic to $\Sigma$ can be given the structure of an infinite dimensional manifold. This is described in \cite[Theorem 44.1]{convenient} and \cite[Example 4.1.7]{hamilton_nash_moser}. It can actually be viewed as the base of a principal bundle by considering the quotient $\cS(\Sigma,M)={\rm Emb}(\Sigma,M)/\Diff(\Sigma)$, where ${\rm Emb}(\Sigma,M)$ is the submanifold of $\cC^\infty(\Sigma,M)$ consisting of embeddings, and $\Diff(\Sigma)$ acts on ${\rm Emb}(\Sigma,M)$ by precomposition.

A deformation of a submanifold consists in a normal vector field on the submanifold, i.e. the tangent space at $S\in \cS(\Sigma,M)$ can be naturally identified with the space of sections $\G(\nu S)$ where $\nu S$ is the normal bundle defined by $\nu_x S=T_xM/T_x S$ for $x\in S$ \cite[Example  4.5.5]{hamilton_nash_moser}.

\vspace{\baselineskip}

Now let us come back to our setting where $(M,g)$ is a spatially compact spacetime, and $\Cauchy(M,g)$ is the set of spacelike Cauchy hypersurfaces in $(M,g)$. Fix  a smooth Cauchy hypersurface $\Sigma\subset M$. Since all Cauchy hypersurfaces are diffeomorphic to each other, we find that $\Cauchy(M,g)\subset \cS(\Sigma,M)$. It happens that $\Cauchy(M,g)$ is open in $\cS(\Sigma,M)$, but this is not obvious from the definition of Cauchy hypersurfaces, its comes from the fact that all spacelike closed hypersurfaces are Cauchy hypersurfaces \cite[Theorem 1.1]{BS_splitting}.


Spacelike being  an open condition in $\cS(\Sigma,M)$, we find that $\Cauchy(M,g)$ is open in $\cS(\Sigma,M)$. The tangent space  $T_S\Cauchy(M,g)$ at $S\in\Cauchy(M,g)$ is the space of sections of the normal bundle of $S$. Since $S\in \Cauchy(M,g)$ is a spacelike hypersurface, there is a unique normal vector field $n_S:S\to TM$ such that $g(n_S,n_S)=-1$ and $n_S$ is future directed. Any section of the normal bundle can be represented as $un_S$ for some function $u\in \cC^\infty(S)$, and we will use this identification between $T_S\Cauchy(M,g)=\G(\nu S)$ and $\cC^\infty(S)$ to describe tangent vectors.


\subsection{Coordinates on \texorpdfstring{$\Cauchy(M,g)$}{C(M,g)}}

Let $(M,g)$ be a spatially compact spacetime, $\Cauchy(M,g)$ the space of spacelike Cauchy hypersurfaces and $\Sigma\in\Cauchy$. Following Theorem \ref{thm - orthogonal splitting}, we may assume that $M=\Sigma\times \R$, and that the metric $g$ on $M$ splits as $g_{(x,t)}=-\beta(x,t)dt^2+(g_t)_x$  for all $(x,t)\in\Sigma\times\R=M$ where $\beta$ is a smooth positive function on $\Sigma\times \R$ and $t\mapsto g_t$ is a smooth family of Riemannian metrics on $\Sigma$.

 The abstract setting of infinite dimensional manifolds is actually  not quite required for our study since we can find a (non canonical) global chart for $\Cauchy(M,g)$.

\begin{lemme} \label{lem - global chart for the space of Cauchy hypersurfaces}
The graph map
\[ \Gr:\map{\cU}{\Cauchy}{f}{\set{(x,f(x))}{x\in \Sigma}}\]
is a diffeomorphism from the open set 
\[ \cU=\set{f\in\cC^\infty(\Sigma)}{~\forall (x,z)\in T\Sigma\quad \beta(x,f(x))\,d_xf(z)^2<(g_{f(x)})_x(z,z)} \] to the space of Cauchy hypersurfaces $\Cauchy(M,g)$.
\end{lemme}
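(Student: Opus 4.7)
The plan is to prove the lemma in two stages: first establish that $\Gr$ is a bijection from $\cU$ to $\Cauchy(M,g)$, then verify smoothness of $\Gr$ and its inverse with respect to the manifold structures. The starting observation is that the condition defining $\cU$ is exactly a spacelike condition on the graph. A direct calculation of the pulled-back metric on $\graph f$ at $(x,f(x))$ shows that a tangent vector of the form $(z, d_xf(z))$ with $z \in T_x\Sigma$ has squared norm
\[
(g_{f(x)})_x(z,z) - \beta(x,f(x))\, d_xf(z)^2,
\]
which is positive for all $z\neq 0$ precisely when $f \in \cU$. Hence $\Gr(f)$ is a smooth closed spacelike hypersurface, and Theorem \ref{thm - spacelike hypersurfaces are Cauchy} ensures $\Gr(f) \in \Cauchy(M,g)$.

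Injectivity of $\Gr$ is immediate. For surjectivity, I would fix $S \in \Cauchy(M,g)$ and, for each $x \in \Sigma$, consider the vertical curve $\gamma_x : t \mapsto (x,t)$. Because $g(\partial_t,\partial_t) = -\beta(x,t) < 0$, the curve $\gamma_x$ is an inextensible future directed timelike curve (after orienting time so that $\partial_t$ is future directed), hence by the Cauchy property it meets $S$ at exactly one point $(x,f(x))$. This defines a function $f:\Sigma \to \R$ with $S = \Gr(f)$ as sets. Smoothness of $f$ follows from the implicit function theorem: near each $p = (x,f(x)) \in S$, the hypersurface $S$ is locally a level set of a smooth submersion, and $\partial_t$ is transverse to $T_p S$ since the latter is spacelike and $\partial_t$ is timelike. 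The graph $S = \Gr(f)$ being spacelike then gives $f \in \cU$.

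It remains to show that $\Gr$ and its inverse are smooth between the corresponding infinite-dimensional manifolds. The map $f \mapsto (x\mapsto (x,f(x)))$ from $\cC^\infty(\Sigma)$ to $\mathrm{Emb}(\Sigma,M)$ is evidently smooth, and composition with the quotient projection $\mathrm{Emb}(\Sigma,M)\to \cS(\Sigma,M)$ produces $\Gr$; smoothness of the inverse $S\mapsto f_S$ is a standard application of the implicit function theorem on Fr\'echet spaces within the formalism of \cite{convenient,hamilton_nash_moser}. I expect the main obstacle to be this last point, since while the bijection and the spacelike computation are elementary, rigorously identifying the differentiable structure coming from the global chart $\Gr$ with the intrinsic one on $\cS(\Sigma,M)$ requires unraveling the convenient calculus definition of the submanifold-space structure. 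Because $\Gr$ is essentially an affine embedding into a function space followed by a quotient, this unraveling should reduce to a direct verification rather than anything genuinely delicate.
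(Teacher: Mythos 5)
Your proposal is correct and follows essentially the same route as the paper's proof: the defining condition of $\cU$ is exactly that $\Gr(f)$ is spacelike, bijectivity rests on Theorem \ref{thm - spacelike hypersurfaces are Cauchy} together with the fact that spacelike Cauchy hypersurfaces in the splitting are graphs over $\Sigma$ (which you spell out via the vertical timelike curves and the implicit function theorem), and smoothness of $\Gr$ and its inverse is read off from the quotient structure ${\rm Emb}(\Sigma,M)/\Diff(\Sigma)$ of \cite{convenient,hamilton_nash_moser}. You simply provide more detail than the paper, which states these points without elaboration.
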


\begin{proof}
The condition defining $f\in \cU$ is exactly that $\Gr(f)$ is spacelike, so the fact that $f$ is a bijection follows from \cite[Theorem 1.1]{BS_splitting}. The smoothness of $\Gr$ and its inverse follow directly from the definition of the manifold structure as the quotient ${\rm Emb(\Sigma,M)}/\Diff(\Sigma)$ considered in \cite[Theorem 44.1]{convenient} and \cite[Example 4.1.7]{hamilton_nash_moser}.
\end{proof}

 In order to compute the Riemannian metric in this chart, we need to understand the differential of the map $\Gr$. Recall that the tangent space  $T_S\Cauchy(M,g)$ is the space of sections of the normal bundle of $S$, and is identified with $\cC^\infty(S)$ through the use of the  future directed unit normal vector field $n_S:S\to TM$.
 
\begin{lemma}\label{lem - differential graph map} Let $f\in\cU$, $u\in \cC^\infty(\Sigma)$ and $x\in \Sigma$. The differential of the map $\Gr:\cU\to\Cauchy$ is given by:
 \[ D_f\Gr(u)(x,f(x))=\left(\frac{\beta(x,f(x))}{1-\beta(x,f(x))\,\Vert d_xf\Vert_{ g_f}^2}\right)^{\frac{1}{2}} u(x)\]
 where $g_f$ is the Riemannian metric on $\Sigma$ defined by $(g_f)_x=(g_{f(x)})_x$ for $x\in\Sigma$.
\end{lemma}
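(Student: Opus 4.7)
The plan is to apply the definition of the tangent space $T_S\Cauchy(M,g)$ to a concrete variation of $f$. Given $u\in\cC^\infty(\Sigma)$, I would take the affine family $f_s=f+su$, which remains in the open set $\cU$ for $s$ near $0$. The corresponding parametrizations $\phi_s:\Sigma\to M$, $\phi_s(x)=(x,f_s(x))$, describe the curve $s\mapsto \Gr(f_s)$ in $\Cauchy(M,g)$, and its velocity at $s=0$ is represented by the field $\partial_s\phi_s|_{s=0}(x)=(0,u(x))\in T_{(x,f(x))}M$ modulo the tangent space of $\Gr(f)$. Under the identification $\G(\nu \Gr(f))\cong \cC^\infty(\Gr(f))$ induced by the future directed unit normal $n_{\Gr(f)}$, this means that
\[ D_f\Gr(u)(x,f(x)) \;=\; -g\bigl((0,u(x)),\,n_{\Gr(f)}(x,f(x))\bigr), \]
the minus sign being forced by the Lorentzian normalization $g(n_{\Gr(f)},n_{\Gr(f)})=-1$.

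The main step is then to write $n_{\Gr(f)}$ explicitly in the product splitting $M=\Sigma\times\R$. I would look for $n=(N,\lambda_0)\in T_x\Sigma\oplus T_{f(x)}\R$ satisfying three conditions: $g$-orthogonality to every tangent vector $(z,d_xf(z))$ of $\Gr(f)$ at $(x,f(x))$, unit length $g(n,n)=-1$, and future direction $\lambda_0>0$. Expanding $g$ using the splitting, the orthogonality condition becomes $(g_{f(x)})_x(N,z)=\beta(x,f(x))\,\lambda_0\,d_xf(z)$ for every $z\in T_x\Sigma$, which by definition of the gradient with respect to $g_f$ forces
\[ N \;=\; \lambda_0\,\beta(x,f(x))\,\vec\nabla^{g_f}f(x). \]
Plugging this into $g(n,n)=-1$ yields
\[ \lambda_0^2\,\beta(x,f(x))\,\bigl(1-\beta(x,f(x))\,\Vert d_xf\Vert_{g_f}^2\bigr) \;=\; 1, \]
and the factor in parentheses is positive precisely because $f\in\cU$, so the unique positive $\lambda_0$ is determined.

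To conclude, a direct computation in the split metric gives $g\bigl((0,u(x)),(N,\lambda_0)\bigr)=-\beta(x,f(x))\,\lambda_0\,u(x)$ (the $\Sigma$-component of $(0,u(x))$ is zero), hence $D_f\Gr(u)(x,f(x))=\beta(x,f(x))\,\lambda_0\,u(x)$, and substituting the explicit value of $\lambda_0$ gives exactly the claimed formula. I do not expect a serious obstacle here: the computation is two lines of linear algebra in the orthogonal splitting $-\beta\,dt^2+g_t$. The points that do require care are the Lorentzian sign in the identification $\nu S\cong \cC^\infty(S)$ via $v\mapsto -g(v,n_S)$, and the use of the defining condition of $\cU$ to guarantee that the denominator $1-\beta\,\Vert d_xf\Vert^2_{g_f}$ is positive, so that both $\lambda_0$ and the square root in the statement make sense.
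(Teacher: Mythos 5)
Your proposal is correct and follows essentially the same route as the paper: compute the future directed unit normal $n_{\Gr(f)}$ explicitly in the orthogonal splitting $-\beta\,dt^2+g_t$, then obtain $D_f\Gr(u)$ as $-g\bigl((0,u),n_{\Gr(f)}\bigr)$, with the same sign care for the timelike normalization and the same use of the defining condition of $\cU$ for positivity of $1-\beta\Vert d_xf\Vert^2_{g_f}$. The only cosmetic difference is that you make the variation $f_s=f+su$ explicit to identify the velocity vector $(0,u)$, which the paper leaves implicit.
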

\begin{rmk}
Throughout this paper we will use the notation $D$ for the differential of a map defined on an infinite dimension space (usually on $\cU$), and the small case letter $d$ for differential of maps on finite dimensional manifolds.
\end{rmk}
\begin{proof}
We first compute the future directed unit normal vector field $n_{\Gr(f)}$ of $\Gr(f)$. Let $x\in \Sigma$, and write $n_{\Gr(f)}(x,f(x))= (v,\lambda)$. The fact that $n_{\Gr(f)}$ is future directed translates as $\lambda>0$. Since $T_{(x,f(x))}\Gr(f)=\Gr(d_xf)$, the vector $n_{\Gr(f)}(x,f(x))$ is orthogonal to $(z,d_xf(z))$ for any $z\in T_x\Sigma$, i.e. 
\[\forall z\in T_x\Sigma\quad (g_{f(x)})_x(z,v) = \beta(x,f(x)) \lambda d_xf(z)\]
It follows that :
\[ v=\lambda \beta(x,f(x))\vec\nabla_f f(x)\]
Here $\vec\nabla_f$ represents the gradients for the Riemannian metric $g_f$ on $\Sigma$.
Since $n_{\Gr(f)}(x,f(x))$ is a unit  timelike vector, we find:
\[ \lambda^2 \left[\beta(x,f(x))^2\Vert d_x f\Vert_{ g_f}^2 -\beta(x,f(x)) \right] =-1\]
Finally:
\[ n_{\Gr(f)}(x,f(x))=\left(\beta(x,f(x))-\beta(x,f(x))^2\Vert d_x f\Vert_{ g_f}^2\right)^{-\frac{1}{2}} \left( \beta(x,f(x))\vec\nabla_f f(x), 1\right)\]
Now $D_f\Gr(u)(x,f(x))$ is obtained by projecting orthogonally $(0,u(x))$ on the line spanned by $n_{\Gr(f)}(x,f(x))$. Note that since $n_{\Gr(f)}$ is timelike, there is a sign difference compared to the orthogonal projection in the spacelike case.
\begin{align*}
D_f\Gr(u)(x,f(x)) &= -g_{(x,f(x))}\left(\rule{0cm}{0.4cm}(0,u(x)),n_{\Gr(f)}(x,f(x))\right)\\
&= \left(\frac{\beta(x,f(x))}{1-\beta(x,f(x))\Vert d_xf\Vert_{ g_f}^2}\right)^{\frac{1}{2}} u(x) 
\end{align*}

\end{proof}

\subsection{The Riemannian metric on \texorpdfstring{$\Cauchy(M,g)$}{C(M,g)}}

We now consider the Riemannian metric on $\Cauchy(M,g)$ given by the $L^2$-product on tangent spaces through the identification $T_S\Cauchy(M,g)=\cC^\infty(S)$. This is a weak Riemannian metric, i.e. a smooth positive definite tensor of type $(2,0)$. It is called weak because the topology on the tangent space induced by its norm (in our case the $L^2$ topology) is not the topology of the tangent space (in our case the $\cC^\infty$ topology).

\begin{lemma} \label{lem - metric in coordinates} The pull-back metric $G$ on $\cU$ by the graph function $\Gr$ is given by:
\[\forall f\in\cU~\forall u,v\in \cC^\infty(\Sigma)~~ G_f(u,v)=\int_\Sigma \frac{\beta(x,f(x))}{\sqrt{1-\beta(x,f(x))\Vert d_xf\Vert^2_{g^f}}} u(x)v(x) \dvol_{g_f}(x)\]
where $g_f$ is the Riemannian metric on $\Sigma$ defined by $(g_f)_x=(g_{f(x)})_x$ for $x\in \Sigma$.
\end{lemma}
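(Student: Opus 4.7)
The plan is to unfold the definition of the pullback metric and reduce everything to two pieces: the pointwise formula for $D_f\Gr$ already provided by Lemma \ref{lem - differential graph map}, and the Jacobian of the parametrisation $\iota_f : \Sigma \to M$, $x\mapsto (x,f(x))$, that identifies $\Sigma$ with $\Gr(f)$ as a submanifold of $M=\Sigma\times\R$. By definition,
\[
G_f(u,v) \;=\; \int_{\Gr(f)} D_f\Gr(u)\,D_f\Gr(v)\,\dvol_{\Gr(f)}
\;=\; \int_\Sigma D_f\Gr(u)\!\circ\!\iota_f \;\cdot\; D_f\Gr(v)\!\circ\!\iota_f \;\; \iota_f^{*}\dvol_{\Gr(f)}.
\]
The first factor, thanks to Lemma \ref{lem - differential graph map}, contributes the scalar
\[
\frac{\beta(x,f(x))}{1-\beta(x,f(x))\,\Vert d_xf\Vert^2_{g_f}}\,u(x)v(x),
\]
so the only thing left to compute is the Jacobian $\iota_f^{*}\dvol_{\Gr(f)}$ relative to the natural volume form $\dvol_{g_f}$ on $(\Sigma,g_f)$.

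To obtain this Jacobian, I would compute the induced metric $\iota_f^{*}g$ on $\Sigma$. Since $d_x\iota_f(z)=(z,d_xf(z))$ and $g=-\beta\,dt^2+g_t$, one gets
\[
(\iota_f^{*}g)_x(z,z)\;=\;(g_f)_x(z,z)\;-\;\beta(x,f(x))\bigl(d_xf(z)\bigr)^2.
\]
Expressed in a $g_f$-orthonormal basis $(e_1,\dots,e_n)$ of $T_x\Sigma$, the matrix of $\iota_f^{*}g$ is $I-\beta(x,f(x))\,ww^{T}$, with $w_i=d_xf(e_i)$, hence $\Vert w\Vert^2=\Vert d_xf\Vert^2_{g_f}$. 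The matrix determinant lemma (or direct eigenvalue computation for a rank-one perturbation of the identity) gives
\[
\det\bigl(I-\beta(x,f(x))\,ww^{T}\bigr)\;=\;1-\beta(x,f(x))\,\Vert d_xf\Vert^2_{g_f},
\]
so
\[
\iota_f^{*}\dvol_{\Gr(f)}\;=\;\sqrt{1-\beta(x,f(x))\,\Vert d_xf\Vert^2_{g_f}}\;\dvol_{g_f}(x).
\]

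Combining the two computations, the factor $1-\beta\Vert d_xf\Vert_{g_f}^2$ appears once in the denominator (from $D_f\Gr(u)\cdot D_f\Gr(v)$) and once in the numerator under a square root (from the Jacobian), which leaves a single $\sqrt{1-\beta\Vert d_xf\Vert^2_{g_f}}$ in the denominator. This gives exactly the formula stated in the lemma.

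The only real obstacle is the determinant computation, which is elementary: the matrix $I-\beta ww^{T}$ has $w$ as eigenvector with eigenvalue $1-\beta\Vert w\Vert^2$ and the orthogonal complement of $w$ as eigenspace with eigenvalue $1$. Note that the positivity of this determinant, i.e.\ the fact that $\iota_f^{*}g$ is Riemannian, is precisely the spacelike condition defining $\cU$ in Lemma \ref{lem - global chart for the space of Cauchy hypersurfaces}, so no further justification is needed to take square roots.
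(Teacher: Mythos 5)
Your proof is correct and follows essentially the same route as the paper: unfold the definition of the pullback, use Lemma \ref{lem - differential graph map} for the integrand, and compute the Jacobian of $x\mapsto(x,f(x))$ by diagonalising the induced metric $g_f-\beta\,(df)^2$ relative to $g_f$ (the paper picks an orthonormal basis with $e_1$ along $\vec\nabla_f f$, you invoke the rank-one determinant identity — the same computation). Nothing is missing.
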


\begin{proof}
By definition, we have:

\[G_f(u,v)=\int_{\Gr(f)}D_f\Gr(u) D_f\Gr(v)\dvol_g
\]
The integrand comes from Lemma \ref{lem - differential graph map}:
\[ D_f\Gr(u)D_f\Gr(v) =  \frac{\beta(x,f(x))}{1-\beta(x,f(x))\Vert  d_xf\Vert_{ g_f}^2}u(x)v(x)
\]
We now must compute the pull-back of $\dvol_g$ on $\Sigma$ by the map $x\mapsto (x,f(x))$. The Lorentzian metric $g$ pulls back to a Riemannian metric $h$ on $\Sigma$  given by:
\begin{equation} \label{eqn - pullback metric on the Cauchy hypersurface} h_x(z,z)=-\beta(x,f(x))d_xf(z)^2 + (g_f)_x(z,z)
\end{equation}
The Riemannian metric on $\cU$ written as an integral over $\Sigma$ is:
\[ G_f(u,v)= \int_\Sigma \frac{\beta(x,f(x))}{1-\beta(x,f(x))\Vert  d_xf\Vert_{ g_f}^2}u(x)v(x) \dvol_h(x) \]

To compute $\dvol_h(x)=\sqrt{\det\left(\left((g_f)_x\right)^{-1}h_x\right)}\dvol_{g_f}(x)$, we consider an orthonormal basis $(e_1,\dots,e_d)$ of $T_x\Sigma$ for the metric $(g_f)_x$ where $e_1=\frac{\vec\nabla_f f}{\Vert \vec\nabla_f f\Vert}$ (unless $d_xf=0$, in which case $h_x= (g_f)_x$ and $\det\left(\left((g_f)_x\right)^{-1}h_x\right)=1$). Here $\vec\nabla_f$ is once again the gradient for the Riemannian metric $g_f$.

 We just have to compute the matrix $(h_x(e_i,e_j))_{1\leq i,j\leq d}$, which is easy thanks to \eqref{eqn - pullback metric on the Cauchy hypersurface}.

\[h_x(e_i,e_j)=\left\lbrace \begin{array}{ccl} 1-\beta(x,f(x)) \Vert d_xf\Vert^2_{(g_f)_x} & \textrm{ if } & i=j=1\\
1 & \textrm{ if } & i=j\geq 2 \\ 0 & \textrm{ if } & i\neq j \end{array}\right.
\]

 This gives:
\[\det\left(\left((g_f)_x\right)^{-1}h_x\right)=1-\beta(x,f(x))\Vert d_xf\Vert^2_{(g_f)_x}
\]

\end{proof}

\subsection{Positive distance} The definition of the Riemannian distance is the same as for finite dimensional manifolds.

\begin{defi} \label{def - riemannian distance}
Let $S,S'\in \Cauchy(M,g)$. The \emph{Riemannian distance} $d_{\Cauchy(M,g)}(S,S')$ is the infimum of lengths of piecewise smooth paths joining $S$ and $S'$.
\end{defi}

The triangle inequality for the Riemannian distance can be shown as in the finite dimensional case by considering concatenations of paths. The major drawback of this infinite dimensional setting is that it could potentially vanish for some pairs of distinct points. As mentionned in the introduction, this actually happens when considering submanifolds of Riemannian manifolds \cite{vanishing_distance}.

We will now prove that $d_{\Cauchy(M,g)}$  is a positive distance on $\Cauchy(M,g)$, but the topology induced by this distance is not the manifold topology of $\Cauchy(M,g)$, it has more open sets.

\begin{proof}[Proof of Theorem \ref{mainthm_distance}]
Let $h$ be a Riemannian metric on $\Sigma$. According to Proposition \ref{prop - bounded lapse function}, it is possible to assume that $M=\Sigma\times I$ where $I\subset \R$ is an open interval and the metric $g$ splits as $g=-\beta dt^2+g_t$ where $\beta:\Sigma\times I\to \R$ is a smooth function  and $g_t$ is a smooth one-parameter family of Riemannian metrics on $\Sigma$ satisfying the following inequality:
\[\forall (x,t)\in\Sigma\times I\quad  \beta(x,t) \sqrt{ \det\left(h_x^{-1}(g_t)_x\right)}\geq 1 \]

 Lemma \ref{lem - global chart for the space of Cauchy hypersurfaces} still holds if we replace $\R$ with $I$ when needed in the expression of $\cU\subset\cC^\infty(\Sigma,I)$, and the expression of the metric in coordinates from  Lemma \ref{lem - metric in coordinates} associated to the previous inequality  yields
\[\forall f\in\cU~\forall u\in\cC^\infty(\Sigma)\quad G_f(u,u)\geq \int_\Sigma u(x)^2 \dvol_{h}(x) \]
Now consider a smooth path $c:\intff{0}{1}\to \cU$ (and denote by $s$ the variable in $\intff{0}{1}$).  The length of $c$ can also be bounded from below:

\[L_{\Cauchy(M,g)}(c)\geq \int_0^1 \left( \int_\Sigma \left(\frac{\partial c}{\partial s}\right)^2 \dvol_{h}(x) \right)^{\frac{1}{2}}  \] 

We now recognize the length of a curve in the Hilbert space $L^2(\Sigma,\dvol_{h})$, so we get the usual lower bound by the norm:

\[ L_{\Cauchy(M,g)}(c) \geq  \Vert c(0)-c(1)\Vert_{L^2(\Sigma,\dvol_h)} \]
We now fix the endpoints $S_0=\Gr(f_0),S_1=\Gr(f_1)$ of $c$, so we find:
\[ d_{\Cauchy(M,g)}( S_0,S_1 ) \geq  \Vert f_0-f_1\Vert_{L^2(\Sigma,\dvol_h)} \]

\end{proof}

\section{The Levi-Civita connection of \texorpdfstring{$\Cauchy(M,g)$}{C(M,g)}} \label{sec: Levi-Civita connection}

For the rest of this paper, we consider a product manifold $M=\Sigma \times \R$ where $\Sigma$ is a compact manifold, and a Lorentzian metric $g$ on $M$ that split as $-g_{(x,t)}=\beta(x,t)dt^2+(g_t)_x$ where $\beta:\Sigma\times\R\to\R$ is a smooth positive function and $t\mapsto g_t$ is a smooth path of Riemannian metrics on $\Sigma$. As in Lemma \ref{lem - global chart for the space of Cauchy hypersurfaces} we consider the open set \[\cU=\{ f\in\cC^\infty(\Sigma)\, \vert\, \forall x\in\Sigma\, \forall z\in T_x\Sigma \quad\beta(x,f(x))\,d_xf(z)^2< (g_{f(x)})_x(z,z)\}.\]
and the weak Riemannian metric on $\cU$  defined by:
\[ \boxed{G_f(u,v)=\int_\Sigma \frac{\beta(x,f(x))}{(1-\beta(x,f(x))\Vert d_xf\Vert_{(g_f)_x}^2)^{\frac{1}{2}}}u(x)v(x) \dvol_{g_f}(x)}\]
for $f\in \cU$ and $u,v\in\cC^\infty(\Sigma)$, where $g_f$ is the Riemannian metric on $\Sigma$ given by $(g_f)_x=(g_{f(x)})_x$.

\subsection{Notations} Given $f\in \cU$, we will always denote by $g_f$ the Riemannian metric on $\Sigma$ defined by $(g_f)_x=(g_{f(x)})_x$. Most Riemannian considerations on $\Sigma$ will be  with respect to $g_f$. For any function $u\in\cC^\infty(\Sigma)$, we will use the notation $\vec\nabla_fu$ for the gradient of $u$ with respect to the metric $g_f$, and for vector fields $X,Y\in\cX(\Sigma)$ we write $\nabla^f_XY$ for the Levi-Civita connection $\nabla^f$ of the Riemannian metric $g_f$.
 
 \vspace{\baselineskip}

 We also define $F(x,z,t)=(1-\beta(x,t)g_x^t(z,z))^{-\frac{1}{2}}$ for $(x,z)\in T\Sigma$ and $t\in\R$ such that $\beta(x,t)g_x^t(z,z)<1$. We will use the notations $\beta_f$ and $F_f$ for $\beta_f(x)=\beta(x,f(x))$ and $F_f(x)=F(x,\vec\nabla_f f(x),f(x))$. This simplifies the expression of the Riemannian metric:
\[ \boxed{G_f(u,v)=\int_\Sigma \beta_f F_f uv \dvol_{g_f}} \]

The first important computation will be the Levi-Civita connection of $G$. Since there is no existence theorem for weak Riemannian metrics, we have no other option but to compute it.

\subsection{Some differentials} The Levi-Civita connection requires a differentiation of $G_f(u,v)$ with respect to $f$. For this, we must consider the maps $\beta_\bullet:f\mapsto \beta_f$ and $F_\bullet:f\mapsto F_f$. We will also need to differentiate the volume element $\dvol_{g_\bullet}:f\mapsto\dvol_{g_f}$. These are all smooth functions of $f$ because they all depend on some finite order jet of $f$.
 
 The easiest functions to differentiate are those that only depend on the value of $f$ at a given point (i.e. its $0$-jet), and not on its derivatives. This is the case of the volume element.
 
\begin{lemme} \label{lem - differential volume element} The differential of the volume element is:
\[ D_f(\dvol_{g_\bullet})(u)=\frac{1}{2}\Tr\left( (g_f)^{-1}h_{f}\right)u\dvol_{g_f}\]

where  $h_t=\left.\frac{d}{ds}\right\vert_{s=t}g_s$, and $h_f$ is the tensor of type $(2,0)$ on $\Sigma$ defined by $(h_f)_x=(h_{f(x)})_x$.

\end{lemme}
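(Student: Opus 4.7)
The plan is to reduce the computation to a pointwise derivative of $\sqrt{\det g_f}$ via Jacobi's formula, exploiting the fact that the map $f \mapsto (g_f)_x$ depends only on the value $f(x)$ (the $0$-jet of $f$) and not on any derivatives of $f$.

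First, I would pick a point $x \in \Sigma$ and a variation $f_s = f + su$ for small $s$, and compute $\ddso (g_{f_s})_x$. Since $(g_{f_s})_x = (g_{f(x)+su(x)})_x$, the chain rule gives
\[ \ddso (g_{f_s})_x = u(x) \cdot \left.\frac{d}{dt}\right\vert_{t=f(x)} (g_t)_x = u(x)\,(h_f)_x. \]
This identifies the pointwise linearization of $f \mapsto g_f$ as multiplication of the fixed tensor field $h_f$ by the scalar $u$.

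Next, writing $\dvol_{g_f} = \sqrt{\det (g_f)_{ij}}\, dx^1 \wedge \cdots \wedge dx^d$ in an arbitrary coordinate chart, I would apply Jacobi's formula, which says that for any smooth curve $A(s)$ of positive definite symmetric matrices,
\[ \dds \sqrt{\det A(s)} = \tfrac{1}{2} \sqrt{\det A(s)}\, \Tr\!\left(A(s)^{-1} A'(s)\right). \]
Taking $A(s) = (g_{f_s})_{ij}$ at a fixed coordinate point $x$ and using the previous step, $A'(0) = u(x)\,(h_f)_{ij}(x)$, so
\[ \ddso \sqrt{\det (g_{f_s})_{ij}}(x) = \tfrac{1}{2} \sqrt{\det (g_f)_{ij}}(x)\, u(x)\, \Tr\!\left((g_f)^{-1} h_f\right)(x). \]
Multiplying both sides by $dx^1 \wedge \cdots \wedge dx^d$ gives exactly the claimed formula $D_f(\dvol_{g_\bullet})(u) = \tfrac{1}{2}\Tr((g_f)^{-1} h_f)\, u\, \dvol_{g_f}$.

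There is no real obstacle here, since the volume form depends only on the $0$-jet of $f$, so smoothness and differentiability of $f \mapsto \dvol_{g_f}$ reduce to the smoothness of the finite-dimensional maps $t \mapsto (g_t)_x$; the only slightly delicate point is checking that the computation is independent of coordinates, which is automatic from the intrinsic form of Jacobi's formula $\delta \log \det = \Tr(g^{-1} \delta g)$.
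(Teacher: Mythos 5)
Your proof is correct and follows essentially the same route as the paper: write $\dvol_{g_f}$ in local coordinates, differentiate $\det$ via Jacobi's formula, and use the chain rule together with the fact that $(g_f)_x$ depends only on the value $f(x)$, so the linearization is $u\,h_f$. The only difference is that you spell out the chain-rule step explicitly, which the paper leaves implicit ("the formula follows directly"); there is no gap.
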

\begin{proof}
In local coordinates $(x^1,\dots,x^d)$ on $\Sigma$, one has: \[\dvol_{g_t}(x)=\sqrt{\det(g_t(x))}dx^1\wedge\dots\wedge dx^d\]
 where $g_t(x)$ is the matrix of $(g_t)_x$ in these coordinates. Since it is invertible, we have:
 \[ \frac{d}{dt}\det(g_t(x))=\det(g_t(x))\Tr\left(g_t(x)^{-1}h_t(x)\right)\]
  We now get:
 \[ \frac{d}{dt}\sqrt{\det(g_t(x))}=\frac{1}{2}\det(g_t(x))\Tr\left(g_t(x)^{-1}h_t(x)\right)\left(\det(g_t(x))\right)^{-\frac{1}{2}}\]
 The formula follows directly.
\end{proof}

In order to differentiate $f\mapsto F_f$, we need to differentiate $f\mapsto \vec\nabla_f f$, which is not linear as the gradient $\vec\nabla_f$ depends on the metric $g_f$.
\begin{lemme} For any fixed $w\in \cC^\infty(\Sigma,\R)$, the differential of  $\vec\nabla_\bullet w:f\mapsto \vec\nabla_f w$ is given by:
\[ D_f(\vec\nabla_\bullet w)(u)=-u\widetilde{h_f}(\vec\nabla_f w)\]
where $\widetilde{h_t}$ is the $g_t$-self adjoint  endomorphism of $T\Sigma$  satisfying $g_t(\widetilde{h_t}(\cdot),\cdot)=h_t$.
\end{lemme}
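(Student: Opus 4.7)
The plan is to differentiate the defining identity of the gradient and read off the formula. Fix $w\in\cC^\infty(\Sigma)$, a tangent vector field $X$ on $\Sigma$, and a point $x\in\Sigma$. For every $f\in\cU$ the gradient is characterized by
\[ (g_f)_x\bigl(\vec\nabla_f w(x),X(x)\bigr)=d_x w(X(x)). \]
The right-hand side does not involve $f$, so when we differentiate both sides at $f$ in the direction $u\in\cC^\infty(\Sigma)$ we get the identity
\[ \bigl(D_f(g_\bullet)(u)\bigr)_x\bigl(\vec\nabla_f w(x),X(x)\bigr) + (g_f)_x\bigl(D_f(\vec\nabla_\bullet w)(u)(x),X(x)\bigr) = 0. \]

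The key observation is that the dependence of $g_f$ on $f$ is pointwise: $(g_f)_x = (g_{f(x)})_x$ depends only on the value $f(x)$, not on any derivatives of $f$. Consequently the chain rule in the $t$-variable gives
\[ D_f(g_\bullet)(u)_x = u(x)\,(h_{f(x)})_x = u(x)\,(h_f)_x. \]
Substituting this into the differentiated identity yields
\[ u(x)\,(h_f)_x\bigl(\vec\nabla_f w(x),X(x)\bigr) + (g_f)_x\bigl(D_f(\vec\nabla_\bullet w)(u)(x),X(x)\bigr) = 0. \]

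Using the definition $g_t(\widetilde{h_t}(\cdot),\cdot) = h_t(\cdot,\cdot)$ to rewrite the first term, one obtains
\[ (g_f)_x\Bigl(D_f(\vec\nabla_\bullet w)(u)(x) + u(x)\,\widetilde{h_f}\bigl(\vec\nabla_f w\bigr)(x),\,X(x)\Bigr) = 0. \]
Since this holds for every vector field $X$ and every $x\in\Sigma$, and $g_f$ is non-degenerate, we conclude $D_f(\vec\nabla_\bullet w)(u) = -u\,\widetilde{h_f}(\vec\nabla_f w)$, as claimed.

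The only subtle point is justifying the pointwise differentiation formula $D_f(g_\bullet)(u)_x = u(x)(h_f)_x$; once the fact that $(g_f)_x$ depends only on the $0$-jet of $f$ at $x$ is isolated, the rest of the argument is a one-line manipulation with the defining relation of the gradient and the self-adjoint endomorphism $\widetilde{h_f}$. No differentiation of $w$ or of derivatives of $f$ intervenes, which is why the formula is algebraic in $\vec\nabla_f w$ rather than involving a connection term.
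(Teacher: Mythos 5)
Your proof is correct and follows essentially the same route as the paper: differentiate the defining identity $dw(z)=g_f(\vec\nabla_f w,z)$ in the direction $u$, use that $f\mapsto (g_f)_x$ depends only on $f(x)$ so its derivative is $u\,h_f$, and conclude via the definition of $\widetilde{h_f}$ and non-degeneracy of $g_f$. The only difference is that you spell out the pointwise chain-rule step and the final non-degeneracy argument, which the paper leaves implicit.
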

\begin{proof}
We differentiate both signs of $dw(z)=g_f(\vec\nabla_f w,z)$ with respect to $f$ along the function $u$ and get 
\[ 0=g_f(D_f(\vec\nabla_\bullet w)(u),z)+uh_f(\vec\nabla_f w,z) \]
\end{proof}
\begin{lemme}\label{lem - differential gradient}  The differential of $\vec\nabla:f\mapsto \vec\nabla_f f$ is given by:
\[ D_f\vec\nabla(u)=\vec\nabla_f u-u\widetilde{h_f}(\vec\nabla_f f).\]
\end{lemme}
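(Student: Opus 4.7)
The plan is to decompose the dependence of $f\mapsto \vec\nabla_f f$ into two pieces: the dependence of $\vec\nabla_f$ on the metric $g_f$ (which depends on $f$ through its $0$-jet) and the dependence on $f$ viewed as a function whose gradient is being taken. Concretely, I would introduce the auxiliary map $\Phi:\cU\times\cC^\infty(\Sigma)\to\cX(\Sigma)$ defined by $\Phi(f,w)=\vec\nabla_f w$, so that $\vec\nabla:f\mapsto \vec\nabla_f f$ is the composition of the diagonal embedding $f\mapsto (f,f)$ with $\Phi$.

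Then I would apply the chain rule to obtain
\[ D_f\vec\nabla(u)=D_1\Phi(f,f)(u)+D_2\Phi(f,f)(u).\]
The first term $D_1\Phi(f,f)(u)$ records the variation of $\Phi$ coming from the change in the metric, with $w=f$ held fixed as a function. This is precisely what the previous lemma computes: $D_1\Phi(f,f)(u)=-u\,\widetilde{h_f}(\vec\nabla_f f)$. The second term $D_2\Phi(f,f)(u)$ records the variation in the second slot, where the metric $g_f$ is frozen. Since $w\mapsto \vec\nabla_f w$ is linear in $w$ (the gradient is a linear differential operator on functions), its differential in $w$ is itself, so $D_2\Phi(f,f)(u)=\vec\nabla_f u$. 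Adding the two contributions gives the claimed formula.

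I do not expect any real obstacle here: the main subtlety is conceptual rather than computational, namely recognizing that one must separate the two roles played by $f$ in the expression $\vec\nabla_f f$ before differentiating. Once the splitting via $\Phi$ is in place, the first contribution is supplied verbatim by the preceding lemma and the second is immediate from linearity of the gradient in its argument.
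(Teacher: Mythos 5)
Your argument is correct and is in substance the same as the paper's: the paper differentiates the defining identity $d_xf(z)=(g_f)_x(\vec\nabla_f f(x),z)$ with respect to $f$ along $u$, where the linearity of $f\mapsto d_xf$ produces the $\vec\nabla_f u$ term and the variation of $g_f$ produces $-u\,\widetilde{h_f}(\vec\nabla_f f)$ — exactly the two partial contributions you organize via the chain rule for $\Phi(f,w)=\vec\nabla_f w$ along the diagonal. The only cosmetic difference is that you quote the preceding lemma for the metric-variation term, while the paper re-derives that variation inline rather than citing it.
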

\begin{proof}
By definition of the gradient, we have that: \[ \forall x\in\Sigma~\forall z\in T_x\Sigma\quad d_xf(z)=(g_f)_x(\vec\nabla_f f(x),z)\]
 the linearity and continuity of the map $f\mapsto d_xf$ leads us to:
\[ d_xu(z)=u(x)(h_f)_x(\vec\nabla_f f(x), z)+(g_f)_x(D_f\vec\nabla(u)(x),z)\]
Since $d_xu(z)=g_f(\vec\nabla_f u(x),z)$, we get the desired formula.
\end{proof}
We can now differentiate $f\mapsto F_f$.

\begin{prop} \label{prop - differential of F} The differential of $f\mapsto F^f$ is:
 \[D_fF_\bullet (u)= \beta_f F_f^3\left[g_f(\vec\nabla_f f,\vec\nabla_f u)+\xi(f)u\right]
\]
where 
\[\xi(f)=\frac{1}{2}\left(\frac{1}{\beta}\frac{\partial \beta}{\partial t}\right)_fg_f\left(\vec\nabla_f f,\vec\nabla_f f\right)-\frac{1}{2} h_f\left(\vec\nabla_f f,\vec\nabla_f f\right)\in \cC^\infty(\Sigma)\]
\end{prop}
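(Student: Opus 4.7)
The plan is to write $F_f = \Phi(f)^{-1/2}$ with $\Phi(f) = 1 - \beta_f\,g_f(\vec\nabla_f f, \vec\nabla_f f)$, so that by the chain rule $D_f F_\bullet(u) = -\tfrac{1}{2}F_f^3\,D_f\Phi(u)$. The computation then splits into three subcomputations, each isolating one of the ways in which $\Phi$ depends on $f$: through the lapse $\beta_f$, through the metric $g_f$, and through the gradient $\vec\nabla_f f$.

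First I would observe that $\beta_\bullet:f\mapsto \beta_f$ only involves the $0$-jet of $f$, so $D_f\beta_\bullet(u) = (\partial_t\beta)_f\cdot u$. Next, writing $\phi(f) = g_f(\vec\nabla_f f,\vec\nabla_f f)$ and applying the Leibniz rule with respect to the two arguments and the metric itself, the pointwise derivative reads
\[ D_f\phi(u) = u\,h_f(\vec\nabla_f f,\vec\nabla_f f) + 2\,g_f\bigl(D_f\vec\nabla(u),\,\vec\nabla_f f\bigr). \]
Plugging in Lemma \ref{lem - differential gradient} and using $g_f(\widetilde{h_f}(X),Y) = h_f(X,Y)$, the two $h_f$-terms combine with a single sign, yielding
\[ D_f\phi(u) = 2\,g_f(\vec\nabla_f f,\vec\nabla_f u) - u\,h_f(\vec\nabla_f f,\vec\nabla_f f). \]

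It then remains to assemble the product: $D_f\Phi(u) = -D_f\beta_\bullet(u)\,\phi(f) - \beta_f\,D_f\phi(u)$, and to multiply by $-\tfrac{1}{2}F_f^3$. Factoring out $\beta_f F_f^3$, the term $g_f(\vec\nabla_f f,\vec\nabla_f u)$ appears with coefficient $1$, while the coefficient of $u$ regroups exactly into $\xi(f)$ after pulling a factor of $\beta_f$ out of $(\partial_t\beta)_f\phi(f)$ to produce the $\tfrac{1}{\beta}\tfrac{\partial\beta}{\partial t}$ factor. No single step is conceptually difficult; the main care is bookkeeping the sign, which comes out correctly because the two occurrences of $h_f$ (one from differentiating the metric, one from $D_f\vec\nabla$) combine rather than cancel, and the overall factor $-\tfrac12$ from the chain rule on $\Phi^{-1/2}$ then flips every sign into its final form.
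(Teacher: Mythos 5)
Your proposal is correct and takes essentially the same route as the paper: the paper likewise sets $E_f=F_f^{-2}=1-\beta_f\,g_f(\vec\nabla_f f,\vec\nabla_f f)$ (your $\Phi(f)$), differentiates it using $D_f\beta_\bullet(u)=(\partial_t\beta)_f u$ and Lemma \ref{lem - differential gradient}, and concludes from $D_fF_\bullet(u)=-\tfrac{1}{2}F_f^3D_fE_\bullet(u)$. Your intermediate computation, in particular $D_f\phi(u)=2g_f(\vec\nabla_f f,\vec\nabla_f u)-u\,h_f(\vec\nabla_f f,\vec\nabla_f f)$, agrees with the paper's, so there is nothing to add.
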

\begin{proof}The differential of $\beta_\bullet$ is straightforward:
\[ D_f\beta_\bullet(u)= \left(\frac{\partial \beta}{\partial t}\right)_f u.\]
Consider $E_f=F_f^{-2}=1-\beta_fg_f\left(\vec\nabla_f f,\vec\nabla_f f\right)$. From Lemma \ref{lem - differential gradient}, we get:
\[ \begin{array}{rcl}
D_fE_\bullet(u)&= - g_f\left(\vec\nabla_f f,\vec\nabla_f f\right) D_f\beta_\bullet(u)&-2\beta_fg_f\left(\vec\nabla_f f, D_f\vec \nabla (u)\right)\\
&& -\beta_f h_f\left(\vec \nabla_f f,\vec \nabla_f f\right)u\\
&= -g_f\left(\vec\nabla_f f,\vec\nabla_f f\right) \left(\frac{\partial \beta}{\partial t}\right)_f u &- 2\beta_fg_f\left(\vec\nabla_f f, \vec\nabla_f u-u\widetilde{h_f}(\vec\nabla_f f)\right)\\
& & -\beta_f h_f\left(\vec\nabla_f f,\vec\nabla_f f\right)u\\
&=-g_f\left(\vec\nabla_f f,\vec\nabla_f f\right) \left(\frac{\partial \beta}{\partial t}\right)_f u &- 2\beta_fg_f\left(\vec\nabla_f f, \vec\nabla_f u\right)\\
&&+\beta_f h_f\left(\vec\nabla_f f,\vec\nabla_f f\right)u
\end{array} \]
The formula follows from $D_fF_\bullet(u)=-\frac{1}{2}F_f^3D_fE_\bullet(u)$.
\end{proof}

\subsection{The gradient \texorpdfstring{$\vec\nabla_fF_f$}{of F}}
Some integrations by parts will require a formula for the gradient of  $F_f$, i.e. we must differentiate it as functions of $x\in\Sigma$ when $f\in \cU$ is fixed.


\begin{prop} \label{prop - gradient of F} Let $f\in \cU$. The gradient of $F_f$ satisfies:
\[g^f\left(\vec\nabla_f F_f,\vec\nabla_f f\right) =\delta(f) F_f^3\]

where $\delta(f)=\beta_f\Hess_f(f)(\vec\nabla_f f,\vec\nabla_f f)+\frac{1}{2}g_f(\vec\nabla_f f,\vec\nabla_f f)g_f\left(\vec\nabla_f \beta_f,\vec\nabla_f f\right)$ and $\Hess_f$ is the Hessian for the Riemannian metric $g_f$.
\end{prop}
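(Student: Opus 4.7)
The strategy is to compute the differential of $F_f$ as a function on $\Sigma$ (with $f$ fixed) and then contract with $\vec\nabla_f f$, using the identity $g_f(\vec\nabla_f F_f,\vec\nabla_f f)=dF_f(\vec\nabla_f f)$. The key point is that the expression $F_f^{-2}=E_f:=1-\beta_f\, g_f(\vec\nabla_f f,\vec\nabla_f f)$ is a product of two scalar functions on $\Sigma$, so its differential is handled by the Leibniz rule; this avoids having to split $d\beta_f$ or the covariant derivative of the gradient along the two directions of dependence ($x$ and $f(x)$).

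\textbf{Step 1.} Since $F_f=E_f^{-1/2}$, one has $dF_f=-\tfrac{1}{2}F_f^3\,dE_f$, and therefore
\[
g_f(\vec\nabla_f F_f,\vec\nabla_f f)=dF_f(\vec\nabla_f f)=-\tfrac{1}{2}F_f^3\,dE_f(\vec\nabla_f f).
\]

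\textbf{Step 2.} By the Leibniz rule applied to $E_f=1-\beta_f\,\gamma$ where $\gamma:=g_f(\vec\nabla_f f,\vec\nabla_f f)\in\cC^\infty(\Sigma)$,
\[
dE_f(\vec\nabla_f f)=-\gamma\cdot d\beta_f(\vec\nabla_f f)-\beta_f\cdot d\gamma(\vec\nabla_f f).
\]
The first term is immediately rewritten using the definition of the gradient: $d\beta_f(\vec\nabla_f f)=g_f(\vec\nabla_f\beta_f,\vec\nabla_f f)$.

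\textbf{Step 3.} For the second term, I use that $\nabla^f$ is the Levi-Civita connection of the (fixed, on $\Sigma$) Riemannian metric $g_f$, so that metric compatibility yields, for any $v\in T\Sigma$,
\[
d\gamma(v)=v\bigl[g_f(\vec\nabla_f f,\vec\nabla_f f)\bigr]=2\,g_f(\nabla^f_v\vec\nabla_f f,\vec\nabla_f f)=2\,\Hess_f(f)(v,\vec\nabla_f f),
\]
by the definition of the Hessian. In particular $d\gamma(\vec\nabla_f f)=2\,\Hess_f(f)(\vec\nabla_f f,\vec\nabla_f f)$.

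\textbf{Step 4.} Putting Steps 1--3 together,
\[
g_f(\vec\nabla_f F_f,\vec\nabla_f f)=F_f^3\left[\beta_f\,\Hess_f(f)(\vec\nabla_f f,\vec\nabla_f f)+\tfrac{1}{2}g_f(\vec\nabla_f f,\vec\nabla_f f)\,g_f(\vec\nabla_f\beta_f,\vec\nabla_f f)\right],
\]
which is exactly $\delta(f)F_f^3$. The only conceptual subtlety is Step 3: one must remember that, once $f\in\cU$ is fixed, $g_f$ is an honest Riemannian metric on $\Sigma$ whose Levi-Civita connection already encodes the $x$-dependence of $g_{f(x)}$ as $x$ varies. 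No other difficulty arises.
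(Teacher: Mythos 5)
Your proof is correct and follows essentially the same route as the paper: write $F_f=E_f^{-1/2}$ with $E_f=1-\beta_f\,g_f(\vec\nabla_f f,\vec\nabla_f f)$, use the Leibniz rule together with metric compatibility of $\nabla^f$ to identify $d\bigl[g_f(\vec\nabla_f f,\vec\nabla_f f)\bigr]$ with $2\Hess_f(f)(\cdot,\vec\nabla_f f)$, and contract with $\vec\nabla_f f$. The only cosmetic difference is that the paper computes the full gradient $\vec\nabla_f F_f$ before taking the inner product, while you contract the differentials directly, which changes nothing of substance.
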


\begin{rmk}
There are two possible expressions for the second order term in $\delta(f)$:
 \[ \Hess_f(f)(\vec\nabla_f f,\vec\nabla_f f) =g_f\left( \nabla^f_{\vec\nabla_f f}\vec\nabla_f f,\vec\nabla_f f\right) \]
 Here $\nabla^f$ denotes the Levi-Civita connection of the Riemannian metric $g_f$.
\end{rmk}

\begin{proof} We use the notation $E_f=F_f^{-2}=1-\beta_fg_f\left(\vec\nabla_f f,\vec\nabla_f f\right)$ once again, and focus on the gradient of $E_f$. Let $\zeta_f=g_f(\vec\nabla_f f,\vec\nabla_f f)$. Its differential is given by:
 \[\forall z\in T\Sigma\quad d\zeta_f(z)=2g_f\left(\nabla^f_z \vec\nabla_f f,\vec\nabla_f f\right)=2\Hess_f(f)\left(z,\vec\nabla_f f\right) \]
  The symmetry of $\Hess_f(f)$ yields $\vec\nabla_f \zeta_f=2\nabla^f_{\vec\nabla_f f}\vec\nabla_f f$. We can now express the gradient of $E_f=1-\beta_f\zeta_f$:
\[\vec\nabla_f E_f=-\beta_f \vec\nabla_f \zeta_f-\zeta_f\vec\nabla_f \beta_f=-2\beta_f\nabla^f_{\vec\nabla_f f}\vec\nabla_f f-g_f(\vec\nabla_f f,\vec\nabla_f f)\vec\nabla_f \beta_f\]
Since $F_f=E_f^{-\frac{1}{2}}$, we get $\vec\nabla_f F_f=-\frac{1}{2}F_f^3\vec\nabla_f E_f$, which leads to:
\[ \vec\nabla_f F_f = \beta_f F_f^3 \nabla^f_{\vec\nabla_f f}\vec\nabla_f f + \frac{1}{2}F_f^3 g_f(\vec\nabla_f f,\vec\nabla_f f) \vec\nabla_f \beta_f\]
The inner product with $\vec\nabla_ff$ gives the desired formula.
\end{proof}

\subsection{The Levi-Civita connection} The Koszul formula gives an implicit equation for the connection form $\Gamma_f$ at $f\in\cU$. For $u,v,w\in\cC^\infty(\Sigma)$:

\[G_f\left(\rule{0cm}{0.4cm}\Gamma_f(u,v),w\right) = \frac{1}{2}\left[\rule{0cm}{0.5cm}D_f\left(\rule{0cm}{0.4cm} G_\bullet(v,w)\right)(v) + D_f \left(\rule{0cm}{0.4cm}G_\bullet(u,w)\right)(v) - D_f \left(\rule{0cm}{0.4cm}G_\bullet(u,v)\right)(w) \right] \]

Our next step is to compute $D_f \left(\rule{0cm}{0.4cm}G_\bullet(u,v)\right)(w)$. Recall the expression of the metric $G$:
\[ G_f(u,v)=\int_\Sigma\beta_fF_f\dvol_{g_f} \]
The differential of $\beta_f$ is given by:
\[ D_f\beta_\bullet(u)=\left(\frac{\partial\beta}{\partial t}\right)_f u \]

The differential of $F_f$ was computed in Proposition \ref{prop - differential of F}:
 \[D_fF_\bullet (u)= \beta_f F_f^3\left[g_f\left(\vec\nabla_f f,\vec\nabla_f u\right)+\xi(f)u\right]
\]
where 
\[\xi(f)=\frac{1}{2}\left(\frac{1}{\beta}\frac{\partial \beta}{\partial t}\right)_fg_f(\vec\nabla_f f,\vec\nabla_f f)-\frac{1}{2} h_f(\vec\nabla_f f,\vec\nabla_f f)\]

Let us set  $\eta_f=\frac{1}{2}\Tr\left( (g_f)^{-1}h_{f}\right)$, so that Lemma \ref{lem - differential volume element} states:
 \[ D_f(\dvol_{g_\bullet})(u) = \eta_f u \dvol_{g_f} \]

From this we can compute the differential of the map $f\mapsto G_f(u,v)$.
\begin{align*}
D_f \left(\rule{0cm}{0.4cm}G_\bullet(u,v)\right)(w)&= \int_\Sigma F_f \left(\frac{\partial \beta}{\partial t}\right)_f uvw \dvol_{g_f} + \int_\Sigma \beta_f F_f \eta_f uvw \dvol_{g_f} \\
& \hspace{1.5cm} +\int_\Sigma \beta_f^2F_f^3 \left[g_f(\vec\nabla_f f,\vec\nabla_f w)+\xi(f)w\right]uv\dvol_{g_f}\\
&= \int_\Sigma \beta_fF_f \left[ \left(\frac{1}{\beta}\frac{\partial \beta}{\partial t}\right)_f +\eta_f +\beta_fF_f^2 \xi(f) \right]uvw \dvol_{g_f} \\
&\hspace{1.5cm}+\int_\Sigma \beta_f^2F_f^3 g_f(\vec\nabla_f f,\vec\nabla_f w)uv\dvol_{g_f}
\end{align*}
We can now find the implicit equation for the connection form $\Gamma$:
\begin{align*}
2G_f\left(\rule{0cm}{0.4cm}\Gamma_f(u,v),w\right)&= \int_\Sigma \beta_fF_f \left[ \left(\frac{1}{\beta}\frac{\partial \beta}{\partial t}\right)_f+\eta_f +\beta_fF_f^2 \xi(f) \right]uvw \dvol_{g_f} \\
&\hspace{1.5cm}+ \int_\Sigma \beta_f^2F_f^3 g_f\left(\vec\nabla_f f,\vec\nabla_f (uv)\right)w\dvol_{g_f} \\
&\hspace{1.5cm}- \int_\Sigma \beta_f^2F_f^3 g_f\left(\vec\nabla_f f,\vec\nabla_f w\right)uv\dvol_{g_f}
\end{align*}

We can use Green's formula to get rid of $\vec\nabla_f w$ in the last term:

\begin{align*}
- \int_\Sigma \beta_f^2F_f^3 g_f(\vec\nabla_f f,\vec\nabla_f w)uv\dvol_{g_f} &= \int_\Sigma \Div_f\left[uv\beta_f^2F_f^3 \vec\nabla_f f\right] w \dvol_{g_f} 
\end{align*}

%

Here $\Div_f$ stands for the divergence with respect to the Riemannian metric $g_f$. The divergence in this last integral can be computed thanks to Proposition \ref{prop - gradient of F}.  We will  use the notation:
\begin{equation} \label{eqn - def fonction epsilon}
\e(f)=g_f\left(\vec\nabla_f \beta_f,\vec\nabla_f f\right)
\end{equation}

Write $\Delta_f$ for the Laplacian with respect to $g_f$.
\begin{align*}
\Div_f\left[uv\beta_f^2F_f^3 \vec\nabla_f f\right] &= \beta_f^2F_f^3 \left(\Delta_f f\right) uv \\
&\hspace{1cm} + \beta_f^2F_f^3 g_f\left(\vec\nabla_f f, \vec\nabla_f(uv)\right)\\
&\hspace{1cm} + 2 \beta_fF_f^3\e(f) uv \\
&\hspace{1cm}+ 3\beta_f^2F_f^5 \delta(f) uv
\end{align*}

Going back to the computation of the connection form, we find:

\begin{align*}
2G_f\left(\rule{0cm}{0.4cm}\Gamma_f(u,v),w\right)&= \int_\Sigma \beta_fF_f \left[ \left(\frac{1}{\beta}\frac{\partial \beta}{\partial t}\right)_f +\eta_f \right]uvw \dvol_{g_f} \\
&\hspace{1cm}+\int_\Sigma \beta_f F_f\left[\beta_f\xi(f)+\beta_f\Delta_f f +2\e(f) \right]F_f^2 uvw\dvol_{g_f}\\
&~~~+\int_\Sigma \beta_f F_f\left[ 3\beta_f\delta(f) \right]F_f^4 uvw\dvol_{g_f}\\
&~~~+ 2\int_\Sigma \beta_f^2F_f^3 g_f\left(\vec\nabla_f f,\vec\nabla_f (uv)\right)w\dvol_{g_f} 
\end{align*}

We have proved the following result.

\begin{theorem} \label{thm - connexion de Levi-Civita}
Let $(M,g)$ be a spatially compact spacetime. The Levi-Civita connection of the $L^2$ metric on $\Cauchy(M,g)$ exists, and its expression in the graph coordinates is:
\[\boxed{\Gamma_f(u,v)=\frac{1}{2}\p(f) uv + g_f\left(\psi(f),\vec\nabla_f(uv)\right)}
\] where $\psi:\cU\to \cX(\Sigma)$ is defined by $\psi(f)=\beta_fF_f^2\vec\nabla_f f$ and  $\p:\cU\to\cC^\infty(\Sigma)$ is defined by:
\begin{align*} \p(f)&=\left(\frac{1}{\beta}\frac{\partial \beta}{\partial t}\right)_f+\eta_f\\ &+\left[\beta_f\xi(f)+\beta_f\Delta_f f +2\e(f) \right]F_f^2\\
&+ \left[ 3\beta_f\delta(f) \right]F_f^4
\end{align*}
\end{theorem}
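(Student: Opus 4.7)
The plan is to use the Koszul formula as an implicit definition of the connection. Because $G$ is only a weak Riemannian metric, the usual existence theorem is unavailable, so one cannot just invoke it: for each $f \in \cU$ and each pair $u,v \in \cC^\infty(\Sigma)$, the linear form
\[ w \mapsto \tfrac{1}{2}\bigl[D_f(G_\bullet(v,w))(u) + D_f(G_\bullet(u,w))(v) - D_f(G_\bullet(u,v))(w)\bigr] \]
must be explicitly shown to be representable as $w \mapsto G_f(\Gamma_f(u,v), w)$ for some smooth function $\Gamma_f(u,v) \in \cC^\infty(\Sigma)$. The strategy is therefore to compute $D_f(G_\bullet(u,v))(w)$ once and for all, reshape it into the form $\int_\Sigma \beta_f F_f \cdot (\text{smooth coefficient}) \cdot w \, \dvol_{g_f}$, and then symmetrize in $u,v$ to read off $\Gamma_f(u,v)$.

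To compute $D_f(G_\bullet(u,v))(w)$ I would differentiate under the integral sign in the expression $G_f(u,v) = \int_\Sigma \beta_f F_f uv \, \dvol_{g_f}$ using the three ingredients already established: the differential $D_f \beta_\bullet(w) = \bigl(\tfrac{\partial\beta}{\partial t}\bigr)_f w$, the volume element differential from Lemma \ref{lem - differential volume element} giving $\eta_f w \dvol_{g_f}$, and most importantly the formula from Proposition \ref{prop - differential of F} for $D_f F_\bullet(w) = \beta_f F_f^3[g_f(\vec\nabla_f f, \vec\nabla_f w) + \xi(f) w]$. This naturally produces two qualitatively different contributions: a collection of terms proportional to $uvw$, which pose no problem, and a single term of the form $\int_\Sigma \beta_f^2 F_f^3\, g_f(\vec\nabla_f f, \vec\nabla_f w)\, uv\, \dvol_{g_f}$ in which $w$ appears under a gradient.

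The critical and delicate step, and the only place where the existence of the connection is really at stake, is removing the $\vec\nabla_f w$ from this last term by Green's formula: it rewrites as $-\int_\Sigma \Div_f\!\bigl(uv\,\beta_f^2 F_f^3 \vec\nabla_f f\bigr)\, w\, \dvol_{g_f}$, and expanding the divergence with the Leibniz rule produces a Laplacian term $\beta_f^2 F_f^3 (\Delta_f f) uv$, a term $\beta_f^2 F_f^3\, g_f(\vec\nabla_f f, \vec\nabla_f(uv))$ still involving first derivatives but now only of the symmetric product $uv$, and terms coming from differentiating $\beta_f^2$ and $F_f^3$ along $\vec\nabla_f f$; the latter is handled precisely by Proposition \ref{prop - gradient of F}, which converts $g_f(\vec\nabla_f F_f, \vec\nabla_f f)$ into $\delta(f) F_f^3$, producing the $F_f^4$ correction visible in the boxed formula for $\p(f)$. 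The $\vec\nabla_f(uv)$ contribution combines with the two symmetric Koszul terms (which similarly produce $\vec\nabla_f(uw)$ and $\vec\nabla_f(vw)$, canceling the $\vec\nabla_f w$ parts) to leave only the clean gradient term $g_f(\psi(f), \vec\nabla_f(uv))$ with $\psi(f) = \beta_f F_f^2 \vec\nabla_f f$.

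The main obstacle, as indicated, is the integration by parts: it is not a routine cosmetic step but the actual content of existence, since any residual derivative of $w$ would mean that no smooth function $\Gamma_f(u,v)$ could represent the Koszul linear form with respect to the weak inner product $G_f$. Once this step has gone through, dividing by the common factor $\beta_f F_f$ inside the integral and matching coefficients against $G_f(\Gamma_f(u,v), w)$ identifies $\Gamma_f(u,v)$ as the stated smooth function on $\Sigma$, finishing the proof.
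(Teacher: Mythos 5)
Your proposal follows essentially the same route as the paper: the Koszul formula, differentiation of $G_f(u,v)$ via the established formulas for $\beta_f$, $F_f$ (Proposition \ref{prop - differential of F}) and the volume element (Lemma \ref{lem - differential volume element}), removal of $\vec\nabla_f w$ by Green's formula with the divergence expanded using Proposition \ref{prop - gradient of F}, and identification of $\Gamma_f(u,v)$ after dividing by $\beta_f F_f$ — this is exactly the paper's argument, including the correct emphasis that the integration by parts is where existence of the connection is actually secured. One bookkeeping slip in your final assembly: the two symmetric Koszul terms do not produce $\vec\nabla_f(uw)$ and $\vec\nabla_f(vw)$ (no $\vec\nabla_f w$ appears in them, so there is nothing to cancel there); they produce $g_f\left(\vec\nabla_f f,\vec\nabla_f u\right)vw$ and $g_f\left(\vec\nabla_f f,\vec\nabla_f v\right)uw$, whose sum is $g_f\left(\vec\nabla_f f,\vec\nabla_f(uv)\right)w$, and adding the identical term coming from the divergence expansion gives the factor $2\beta_f^2F_f^3$ which, after the Koszul $\tfrac{1}{2}$ and division by $\beta_fF_f$, yields $g_f\left(\psi(f),\vec\nabla_f(uv)\right)$ with $\psi(f)=\beta_fF_f^2\vec\nabla_f f$ as stated.
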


\section{The curvature of \texorpdfstring{$\Cauchy(M,g)$}{C(M,g)}} \label{sec: curvature}

Now that we have established the existence of the Levi-Civita connection, the existence of the Riemann curvature tensor follows from the usual formula $R=D\Gamma + \Gamma\wedge \Gamma$, where the exterior derivative $D\Gamma$ is defined by:

\[ D\Gamma_f(u,v)w = D_f(\Gamma_\bullet(v,w))(u)-D_f(\Gamma_\bullet(u,w)(v) \] 
and the wedge product $\Gamma\wedge\Gamma$ is given by:
\[(\Gamma\wedge\Gamma)_f(u,v)w=\Gamma_f(u,\Gamma_f(v,w))-\Gamma_f(v,\Gamma_f(u,w)) \]

Note that since $\Sigma$ can be chosen to be any spacelike Cauchy hypersurface, i.e. any element of $\Cauchy(M,g)$, we only need to compute the curvature at $f=0$.

\vspace{\baselineskip}

 We will first focus on the exterior derivative $D\Gamma$, so we need to differentiate $f\mapsto\Gamma_f(v,w)$. It breaks down into four parts:
\begin{align}\label{eqn - differential connection form} D_f(\Gamma_\bullet(v,w))(u) &=\frac{1}{2}D_f\p(u)vw \\
&~~~+g_f\left(D_f\psi(u),\vec\nabla_f(vw)\right)\nonumber\\
&~~~+g_f\left(\psi(f),D_f\left[\vec\nabla_f(vw)\right](u)\right)\nonumber\\
&~~~+h_f\left(\psi(f),u\vec\nabla_f(vw)\right)\nonumber
\end{align}

At $f=0$, we have $\psi(0)=0$ so the two last terms vanish. This means that we only have differentiate $\psi$ and $\p$ as functions of $f$. The functions that $\psi$ is built from have already been differentiated, so this will be an easy task. Most of the work left is for the differential of $\p$.

\subsection{Differentiating \texorpdfstring{$\psi$}{Psi}} 

\begin{lemme} \label{lem - differential psi}
The differential of $\psi$ at $f=0$ is given by:
\[ \forall u\in \cC^\infty(\Sigma)\quad D_0\psi(u)=\beta_0\vec\nabla_0 u \]
\end{lemme}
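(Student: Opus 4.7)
The plan is to apply the product rule directly to the definition $\psi(f)=\beta_f F_f^2 \vec\nabla_f f$ and then exploit the massive simplification that occurs at $f=0$ because $\vec\nabla_0 f\bigr|_{f=0}=\vec\nabla_0 0=0$.

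More precisely, I would first note that at $f=0$ we have $F_0=(1-\beta_0 g_0(0,0))^{-1/2}=1$, and that any term in the derivative which still contains a factor of $\vec\nabla_f f$ evaluated at $f=0$ must vanish. Writing $\psi$ as a product of three factors $\beta_f$, $F_f^2$, and $\vec\nabla_f f$, the product rule yields three contributions:
\begin{align*}
D_0\psi(u) &= D_0\beta_\bullet(u)\cdot F_0^2\cdot \vec\nabla_0 f\bigr|_{f=0} \\
&\quad + \beta_0 \cdot 2F_0\, D_0 F_\bullet(u)\cdot \vec\nabla_0 f\bigr|_{f=0} \\
&\quad + \beta_0 \cdot F_0^2 \cdot D_0\vec\nabla(u).
\end{align*}
The first two terms carry the factor $\vec\nabla_0 f\bigr|_{f=0}=0$, so they vanish. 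For the third term, Lemma \ref{lem - differential gradient} gives $D_0\vec\nabla(u)=\vec\nabla_0 u-u\,\widetilde{h_0}(\vec\nabla_0 0)=\vec\nabla_0 u$. Combining this with $F_0=1$ produces exactly $D_0\psi(u)=\beta_0\vec\nabla_0 u$.

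There is no real obstacle here: the statement is essentially a sanity check that confirms the leading-order behaviour of $\psi$ at the basepoint. The only thing to be careful about is justifying that the product rule applies in this infinite-dimensional setting, but this is immediate because $\psi$ is built from maps whose differentials we have already computed in Proposition \ref{prop - differential of F} and Lemma \ref{lem - differential gradient}, each of which is smooth in the relevant jet of $f$. Everything else is a one-line evaluation using $\vec\nabla_0 0=0$.
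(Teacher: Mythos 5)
Your proof is correct and follows essentially the same route as the paper: differentiate $\psi(f)=\beta_f F_f^2\vec\nabla_f f$ by the product rule, observe that every term retaining a factor $\vec\nabla_0 f=0$ vanishes, and conclude with Lemma \ref{lem - differential gradient} together with $F_0=1$. The paper merely leaves the vanishing of the first two product-rule terms implicit, so nothing further is needed.
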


\begin{proof}
 Recall the expression of $\psi$ found in Theorem \ref{thm - connexion de Levi-Civita}:
\[ \psi(f)=\beta_f F_f^2\vec\nabla_f f \]
The map $f\mapsto \vec\nabla_f f$ was differentiated in Lemma \ref{lem - differential gradient}. At $f=0$, we find $D_0\nabla (u) = \nabla_0 u$, therefore:
\[ D_0\psi(u) = \beta_0  F_0^2 \vec\nabla_0 u \]
The expression of $F_f(x)=F(x,\vec\nabla_f f(x),f(x))$ yields $F_0=1$.

\end{proof}

%
%
%
%
%
%

\subsection{Differentiating second order terms}
In order to differentiate $\p$, we will need to differentiate the maps $f\mapsto \Delta_f f$ and $f\mapsto \Hess_f(f)\left(\vec\nabla_f f,\vec\nabla_f f\right)$. Their expressions at an arbitrary function $f\in \cU$ are quite involved, but they are dramatically simple at $f=0$, and it turns out that they do not contribute to the curvature.

\begin{prop} \label{prop - differential second order terms}
The differential at $f=0$ of the map $\Delta:f\mapsto \Delta_f f$ satisfies:
\[\forall u,v\in\cC^\infty(\Sigma)\quad D_0\Delta(u)v-D_0\Delta(v)u = v\Delta_0u-u\Delta_0v \]
The differential at $f=0$ of the map $\theta:f\mapsto \Hess_f(f)\left(\vec\nabla_f f,\vec\nabla_f f\right)$ vanishes.

\end{prop}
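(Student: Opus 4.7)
The plan is to exploit two structural features that together make this proposition much less painful than the general variation of a second-order operator would suggest. First, each of $\Delta_g$, $\Hess_g$ and $\vec\nabla_g$ is linear in the function argument on which it acts, while its coefficients depend on the metric $g$. Second, the metric $g_f$ depends on $f$ only through its $0$-jet (the pointwise value $f(x)$), so each operator $\Delta_f$, $\Hess_f$, $\vec\nabla_f$ is smooth in $f$ on $\cU$ and the product/chain rule applies freely. The strategy is then to perform the scaling $f \mapsto su$ and use a homogeneity count to kill the metric-variation contributions.

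For the Laplacian identity, I would use linearity of $\Delta_{su}$ in its function argument to write $\Delta(su) = \Delta_{su}(su) = s\,\Delta_{su}(u)$ and differentiate at $s=0$. By the product rule the term containing $\frac{d}{ds}|_{s=0}\Delta_{su}(u)$ is multiplied by the value $s|_{s=0}=0$, so only $\Delta_0 u$ survives, giving $D_0\Delta(u) = \Delta_0 u$. Antisymmetrizing in $u,v$ yields
\begin{equation*}
D_0\Delta(u)\,v - D_0\Delta(v)\,u = v\,\Delta_0 u - u\,\Delta_0 v
\end{equation*}
without ever having to differentiate the Christoffel symbols of $g_f$.

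For the Hessian term, I would use the same scaling. Since both $\Hess_g(w)$ and $\vec\nabla_g w$ are linear in $w$,
\begin{equation*}
\theta(su) = \Hess_{su}(su)\bigl(\vec\nabla_{su}(su),\vec\nabla_{su}(su)\bigr) = s^3\,\Hess_{su}(u)\bigl(\vec\nabla_{su} u,\vec\nabla_{su} u\bigr).
\end{equation*}
The right-hand side is manifestly $O(s^3)$ as $s\to 0$, so $D_0\theta(u) = \frac{d}{ds}|_{s=0}\theta(su) = 0$.

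I do not foresee any real obstacle: the proposition really just asserts that the metric-variation contributions drop out, either because they are multiplied by an $s$-factor vanishing at $s=0$ (Laplacian case) or because $\vec\nabla_0 0 = 0$ appears quadratically, producing a triple zero in $s$ (Hessian case). The only care needed is to note that the operators are smooth in $f$ on $\cU$ so that the term-by-term application of the product rule is legitimate, which is immediate from the fact that $g_f$ depends only on the $0$-jet of $f$.
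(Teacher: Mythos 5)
Your proof is correct, and it takes a genuinely different route from the paper's. The paper proceeds by brute force at a general $f\in\cU$: it differentiates $f\mapsto\nabla^f_XX$ via the Koszul formula, deduces the first variation of $f\mapsto\Hess_f(f)(X,X)$, and only then sets $f=0$, where the unwanted contributions disappear because they are paired with $\vec\nabla_ff$, which vanishes at $f=0$. You instead observe that the differential at the base point is computed along the ray $s\mapsto su$, and exploit linearity of $\Delta_g$, $\Hess_g$, $\vec\nabla_g$ in the function they act on, so that the function argument contributes explicit powers of $s$ (one factor for $\Delta_{su}(su)$, three for $\theta(su)$) that annihilate every metric-variation term at $s=0$; no differentiation of the connection is needed. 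This is shorter and makes the mechanism transparent; what it gives up is the variation formula at arbitrary $f$ (the paper's intermediate identities for $D_f(\nabla^\bullet_XX)$ and $D_f[\Hess_\bullet(X,X)]$), which, however, the paper does not reuse elsewhere, since the curvature is only ever computed at $f=0$. Note also that your computation yields the sharper identity $D_0\Delta(u)=\Delta_0u$; this is consistent with the paper, whose displayed formula for $D_0\Delta(u)$ carries an extra term of the form $cu$ with $c$ independent of $u$, which cancels in the antisymmetrized combination $D_0\Delta(u)v-D_0\Delta(v)u$ (and indeed vanishes, the Hessian of the zero function being zero). The smoothness caveat you flag, needed to apply the product rule along the ray, is exactly the one the paper invokes: all quantities depend smoothly on finite-order jets of $f$.
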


\begin{proof} In order to do this, we start by  fixing a vector field $X\in\cX(\Sigma)$ and differentiating $f\mapsto \Hess_f(f)(X,X)$. We will use the following formula for the Hessian:

\begin{align}\Hess_f(f)(X,X)=X\cdot(X\cdot f)-(\nabla^f_XX)\cdot f
\label{eqn - formule hessienne}
\end{align}

Since $f\mapsto X\cdot(X\cdot f)$ is a continuous linear mapping of $\cC^\infty(\Sigma,\R)$, the problem boils down to computing the differential of $f\mapsto \nabla^f_XX$. Considering a vector field $Z\in \cX(\Sigma)$ that commutes with $X$ (which is sufficient for our purpose since we are interested in the tensor $\Hess_f(f)$), the Kozsul formula simplifies: 

\begin{align} 2g_f\left(\nabla^f_XX,Z\right)=2X\cdot g_f(X,Z)-Z\cdot g_f(X,X)
\label{eqn - Levi-Civita on Sigma}
\end{align}

The differential of the left hand term in \eqref{eqn - Levi-Civita on Sigma}, evaluated at $u\in\cC^\infty(\Sigma)$, is:
\begin{align} 2g_f\left(D_f\left(\nabla^\bullet_XX\right)(u),Z\right)+2uh_f\left(\nabla^f_XX,Z\right)
\label{eqn - Differentielle Levi-Civita lht}
\end{align}

The differential of the right hand term in \eqref{eqn - Levi-Civita on Sigma} is:
\begin{align}
2X\cdot(uh_f(X,Z))-Z\cdot(uh_f(X,X))&=2(X\cdot u)g_f(\widetilde{h_f}(X),Z)-h_f(X,X)Z\cdot u \nonumber \\
&~~~~ +u\left[2X\cdot h_f(X,Z)-Z\cdot h_f(X,X) \right]
\label{eqn - Differentielle Levi-Civita rht}
\end{align}
From \eqref{eqn - Differentielle Levi-Civita lht}$=$\eqref{eqn - Differentielle Levi-Civita rht}, we get:
\begin{align} D_f\left(\nabla^\bullet_XX\right)(u)=(X\cdot u)\widetilde{h_f}(X)-\frac{1}{2}h_f(X,X)\vec\nabla_f u + u\Lambda(f,X)
\label{eqn - Differentielle Levi-Civita formule finale}
\end{align}
where the function $\Lambda:\cU\times \cX(\Sigma)\to \cX(\Sigma)$ can be written explicitly but we do not need to.\\
We can use \eqref{eqn - Differentielle Levi-Civita formule finale} to differentiate \eqref{eqn - formule hessienne} with respect to $f$:
\begin{align*} D_f\left[\Hess_\bullet(X,X)\right](u)&=\Hess_f(u)(X,X)-g_f\left(D_f\left(\nabla^\bullet_XX\right)(u),\vec\nabla_f f\right)\\
&= \Hess_f(u)(X,X)\\
&~~~-h_f\left(X,\vec\nabla_f f\right)g_f\left(X,\vec\nabla_f u\right)+\frac{1}{2}h_f(X,X)g_f\left(\vec\nabla_f u,\vec\nabla_f f\right) \\ &~~~-g_f\left(\Lambda(f,X),\vec\nabla_f f\right)u
\end{align*}
At $f=0$, we are simply left with:
\begin{align} D_0 \Hess(X,X)(u)=\Hess_0(u)(X,X)
\label{eqn - differentielle hessienne}
\end{align}

The trace involved in the definition of the Laplacian is the trace of a bilinear form, i.e. $\Delta_f f=\Tr\left( g_f^{-1} \Hess_f \right)$, we can compute the differential of $\Delta:f\mapsto \Delta_f f$ at $f=0$.
\[ D_0\Delta(u)= -\Tr\left( g_0^{-1}h_0g_0^{-1}\Hess_0\right)u+ \Delta_0 u\]


We now need to differentiate the map  $\theta:f\mapsto\Hess_f(f)\left(\vec\nabla_f f,\vec\nabla_f f\right)$.

\begin{align*}
D_f\theta(u)&= D_f\Hess_\bullet\left(\vec\nabla_f f,\vec\nabla_f f\right)(u)+2\Hess_f(f)\left(\vec\nabla_f f,D_f\vec\nabla (u)\right)\end{align*}

At $f=0$ we directly get $D_0\theta=0$.


\end{proof}

\subsection{Differentiating \texorpdfstring{$\p$}{Phi}}
In order to compute the curvature tensor, we will need the expression for  the differential of $\p$ at $f=0$. More precisely, the term involved in the exterior differential $D\Gamma$ has the form:
\[D_0\p(u)v-D_0\p(v)u\]
 This means that all the terms in $D_0\p(u)$ that are proportional to $u$ will disappear, leaving only the ones that involve derivatives of $u$.
 
 Recall from Theorem \ref{thm - connexion de Levi-Civita} that $\p(f)=\p_1(f)+F_f^2 \p_2(f)+F_f^4\p_3(f)$ where:
\begin{align*}
\p_1(f)&=\left(\frac{1}{\beta}\frac{\partial \beta}{\partial t}\right)_f+\eta_f\\
\p_2(f)&= \beta_f\xi(f)+\beta_f\Delta_f f +2\e(f)\\
\p_3(f)&= 3\beta_f\delta(f)
\end{align*}

It follows from Proposition \ref{prop - differential of F} that $D_0F_\bullet =0$, so we can focus on differentiating $\p_1$, $\p_2$ and $\p_3$.

\begin{lemme} \label{lem - differential phi_1}
The differential of $\p_1$ at $f=0$ satisfies:
\[ \forall u,v\in\cC^\infty(\Sigma)\quad D_0\p_1(u)v-D_0\p_1(v)u=0 \]
\end{lemme}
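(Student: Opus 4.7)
The plan is to observe that $\p_1(f)$ depends on $f$ only through its $0$-jet (i.e. only through the pointwise value of $f$, not through any derivatives), and so its differential is simply pointwise multiplication by some function of $x$; the skew-symmetrization $(u,v)\mapsto D_0\p_1(u)v-D_0\p_1(v)u$ then vanishes trivially.

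More precisely, I would first inspect each summand of $\p_1(f)=\left(\frac{1}{\beta}\frac{\partial \beta}{\partial t}\right)_f+\eta_f$. The first term is by definition $x\mapsto\left(\frac{1}{\beta}\frac{\partial\beta}{\partial t}\right)(x,f(x))$, which depends on $f$ only through the evaluation $f(x)$. For the second term, recall that $\eta_f(x)=\tfrac{1}{2}\Tr\bigl((g_f)_x^{-1}(h_f)_x\bigr)$ where $(g_f)_x=(g_{f(x)})_x$ and $(h_f)_x=(h_{f(x)})_x$; thus $\eta_f(x)$ also depends on $f$ only through the value $f(x)$. Consequently there exists a smooth function $\Psi:\Sigma\times\R\to\R$ such that
\[ \p_1(f)(x)=\Psi(x,f(x)) \qquad \text{for all }f\in\cU,\ x\in\Sigma. \]

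Differentiating in $f$ at an arbitrary point yields
\[ D_f\p_1(u)(x)=\frac{\partial\Psi}{\partial t}(x,f(x))\,u(x), \]
which is just pointwise multiplication of $u$ by a function of $x$ (depending on $f$). Specializing to $f=0$ and setting $A(x)=\frac{\partial\Psi}{\partial t}(x,0)\in\cC^\infty(\Sigma)$, we obtain $D_0\p_1(u)=A\cdot u$.

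The conclusion is then immediate: for any $u,v\in\cC^\infty(\Sigma)$,
\[ D_0\p_1(u)\,v-D_0\p_1(v)\,u=A\,u\,v-A\,v\,u=0. \]
There is no real obstacle here; the only thing to be checked carefully is that neither summand of $\p_1$ secretly involves $\vec\nabla_f f$ or higher jets of $f$, which is visible directly from the definitions of $\beta_f$ and of $\eta_f$.
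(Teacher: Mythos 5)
Your argument is correct and is essentially the paper's own: the paper likewise observes that $\p_1(f)(x)$ depends only on the value $f(x)$, so $D_f\p_1(u)=\Phi_1(f)\,u$ is pointwise multiplication and the skew-symmetrization vanishes. Your additional check that both summands $\left(\frac{1}{\beta}\frac{\partial\beta}{\partial t}\right)_f$ and $\eta_f$ involve only the $0$-jet of $f$ is a welcome, if routine, elaboration of the same idea.
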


\begin{proof}
The value of  $\p_1(f)$ at some $x\in\Sigma$ only depends on $f(x)$, so the differential can be written  as $D_f\p_1(u)=\Phi_1(f)u$ for some $\Phi_1:\cU\to\cC^\infty(\Sigma)$, hence $D_0\p_1(u)v-D_0\p_1(v)u= \Phi_1(0)uv -\Phi_1(0)vu = 0 $.
\end{proof}

\begin{lemme} \label{lem - differential phi_2}
The differential of $\p_2$ at $f=0$ satisfies, for $u,v\in\cC^\infty(\Sigma)$:
\[  D_0\p_2(u)v-D_0\p_2(v)u= 2g_0\left(\vec\nabla_0\beta_0,v\vec\nabla_0 u-u\vec\nabla_0 v\right)+\beta_0\left[v\Delta_0 u-u\Delta_0 v\right] \]
\end{lemme}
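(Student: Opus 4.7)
The plan is to differentiate each of the three summands $\p_2(f)=\beta_f\xi(f)+\beta_f\Delta_f f+2\e(f)$ separately at $f=0$, then antisymmetrize in $u,v$. The hope is that two of the three pieces collapse because they are ``quadratic in $\vec\nabla_f f$'' and hence already vanish to first order at $f=0$.

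First I would treat the term $\beta_f\xi(f)$. Since $\vec\nabla_0 0=0$, both summands of $\xi(f)$ are products of point-evaluation factors with the quadratic forms $g_f(\vec\nabla_f f,\vec\nabla_f f)$ and $h_f(\vec\nabla_f f,\vec\nabla_f f)$, which vanish at $f=0$. Differentiating with the Leibniz rule, and using Lemma \ref{lem - differential gradient} (which gives $D_0\vec\nabla(u)=\vec\nabla_0 u$), the only surviving contribution would involve a factor $g_0(0,\vec\nabla_0 u)$ or $h_0(0,\vec\nabla_0 u)$, which is zero. Hence $\xi(0)=0$ and $D_0\xi=0$, so $D_0(\beta_\bullet\xi)(u)=0$ and this term contributes nothing to the antisymmetrized expression.

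Next I would handle $\beta_f\Delta_f f$. Since $\Delta_0 0=0$, the Leibniz rule gives $D_0(\beta_\bullet\Delta)(u)=\beta_0 D_0\Delta(u)$. Proposition \ref{prop - differential second order terms} then yields directly $D_0(\beta_\bullet\Delta)(u)v-D_0(\beta_\bullet\Delta)(v)u=\beta_0\bigl[v\Delta_0 u-u\Delta_0 v\bigr]$. For the third term, I would rewrite $\e(f)=g_f(\vec\nabla_f\beta_f,\vec\nabla_f f)=df(\vec\nabla_f\beta_f)$ using the defining property of the gradient, which converts the awkward $f$-dependence of $\vec\nabla_f$ into ordinary linearity through the differential $df$. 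Differentiating at $f=0$, where $df_{|f=0}=0$, only the Leibniz term $du(\vec\nabla_0\beta_0)=g_0(\vec\nabla_0 u,\vec\nabla_0\beta_0)$ survives, so $2D_0\e(u)=2g_0(\vec\nabla_0\beta_0,\vec\nabla_0 u)$ and antisymmetrization produces the remaining term $2g_0\bigl(\vec\nabla_0\beta_0,v\vec\nabla_0 u-u\vec\nabla_0 v\bigr)$.

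Adding the three contributions gives exactly the claimed identity. The main potential obstacle is bookkeeping: one must check carefully, using Lemma \ref{lem - differential gradient} and the ``quadratic'' structure, that every contribution to $D_0(\beta_\bullet\xi)$ really does vanish (there are four Leibniz summands, coming from differentiating $\beta_f/\beta$, $g_f$ or $h_f$, and each of the two $\vec\nabla_f f$ factors, and each contains an unpaired $\vec\nabla_0 0=0$). Once this is verified, the argument for the remaining two terms is a one-line application of Proposition \ref{prop - differential second order terms} and the rewriting $\e(f)=df(\vec\nabla_f\beta_f)$.
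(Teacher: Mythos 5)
Your proposal is correct and follows essentially the same route as the paper: split $\p_2$ into its three summands, kill the $\beta_f\xi(f)$ contribution because every Leibniz term is paired with $\vec\nabla_0 0=0$ (the paper only notes that what survives is proportional to $u$ and drops under antisymmetrization, but your stronger claim $D_0(\beta_\bullet\xi)=0$ is also true), get the Laplacian term from Proposition \ref{prop - differential second order terms}, and get the $\vec\nabla_0\beta_0$ term from $\e$. Your rewriting $\e(f)=df(\vec\nabla_f\beta_f)$ is just a cosmetic repackaging of the paper's observation that the derivative of $f\mapsto\vec\nabla_f\beta_f$ (and of $g_f$) is irrelevant because it is paired with $\vec\nabla_0 0=0$.
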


\begin{proof}The term $\p_2=\beta_f\xi(f) + \beta_f \Delta_f f+2\e(f)$ is itself made of three parts. Recall the expression of the differential of $f\mapsto \beta_f$:
\[ D_f\beta_\bullet(u)=\left(\frac{\partial\beta}{\partial t}\right)_f u \]
 The expression of $\xi(f)$ comes from Proposition \ref{prop - differential of F}:
\[\xi(f)=\frac{1}{2}\left(\frac{1}{\beta}\frac{\partial \beta}{\partial t}\right)_fg_f\left(\vec\nabla_f f,\vec\nabla_f f\right)-\frac{1}{2} h_f(\vec\nabla_f f,\vec\nabla_f f)\]
 It differentiates as
\begin{align*}D_f\xi(u) &= \left(\frac{1}{\beta}\frac{\partial \beta}{\partial t}\right)_fg_f\left(\vec\nabla_f f,\vec\nabla_f u\right)-h_f\left(\vec\nabla_f f,\vec\nabla_f u\right)\\
&~~~ + \ell(f)u
\end{align*}
for some expression $\ell(f)$ which will disappear in the formula for $D\Gamma$. 
Indeed, at $f=0$ we find $D_0\xi(u)=\ell(0)u$ and:

\begin{equation} \label{eqn - differential phi_2 part 1}
 D_0\left[\beta_\bullet\xi\right](u)v-D_0\left[\beta_\bullet\xi\right](v)u=0 
\end{equation}

The second term $\beta_f\Delta_ff$ can be differentiated thanks to Proposition \ref{prop - differential second order terms}.

\begin{equation} \label{eqn - differential phi_2 part 2}
 D_0\left[\beta_\bullet\Delta\right](u)v-D_0\left[\beta_\bullet\Delta\right](v)u=\beta_0\left( v\Delta_0u-u\Delta_0v\right) 
\end{equation}

We finally have to differentiate $\e(f)$:

\begin{align*}
\e(f)&=g_f\left(\vec\nabla_f \beta_f,\vec\nabla_f f\right)
\end{align*}
We have not computed the differential of $f\mapsto \vec\nabla_f\beta_f$, but there is no need to since at $f=0$ this term will be multiplied by $\vec\nabla_ff=0$. The only non vanishing part of the differential at $f=0$ is obtained by differentiating $f\mapsto \vec\nabla_ff$, so the computations in Lemma \ref{lem - differential gradient} lead us to:

\begin{align*}
D_0\e(u)&= g_0\left(\vec\nabla_0\beta_0,\vec\nabla_0 u\right) 
\end{align*}
Which yields:
\begin{equation} \label{eqn - differential phi_2 part 3}
D_0\e(u)v-D_0\e(v)u= g_0\left(\vec\nabla_0 \beta_0,v\vec\nabla_0 u-u\vec\nabla_0 v\right)
\end{equation}
The contribution of $\p_2=\beta_f\xi(f) + \beta_f \Delta_f f+2\e(f)$ to $D\Gamma_0$ is obtained by combining \eqref{eqn - differential phi_2 part 1}, \eqref{eqn - differential phi_2 part 2} and \eqref{eqn - differential phi_2 part 3}.

\end{proof}

\begin{lemme} \label{lem - differential phi_3}
The differential of $\p_3$ at $f=0$ vanishes. 
\end{lemme}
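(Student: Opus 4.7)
The plan is to exploit the fact that the expression $\delta(f)$ is ``quadratic" in $\vec\nabla_f f$ in a formal sense: every monomial appearing in $\delta(f)$ contains at least two factors among $\vec\nabla_f f$, $\Hess_f(f)$, and $\vec\nabla_f f$ once more. Since $\vec\nabla_0 0 = 0$, both $\delta(0)$ and its differential should vanish by the Leibniz rule, no new computation required.

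More precisely, I would first write $\p_3(f)=3\beta_f\delta(f)$ and decompose
\[\delta(f)=\beta_f\theta(f)+\frac{1}{2}A(f)B(f)\]
where $\theta(f)=\Hess_f(f)(\vec\nabla_f f,\vec\nabla_f f)$ is the function already treated in Proposition \ref{prop - differential second order terms}, and $A(f)=g_f(\vec\nabla_f f,\vec\nabla_f f)$, $B(f)=g_f(\vec\nabla_f\beta_f,\vec\nabla_f f)$. At $f=0$ the identity $\vec\nabla_0 0=0$ gives immediately $\theta(0)=0$, $A(0)=0$, and $B(0)=0$, so that $\delta(0)=0$.

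Applying Leibniz to $\p_3=3\beta_\bullet\delta$ at $f=0$ and using $\delta(0)=0$, one has $D_0\p_3(u)=3\beta_0 D_0\delta(u)$, so it suffices to prove $D_0\delta=0$. Expanding again by Leibniz,
\[D_0\delta(u)=D_0\beta_\bullet(u)\,\theta(0)+\beta_0\,D_0\theta(u)+\frac{1}{2}\bigl[D_0A(u)B(0)+A(0)D_0B(u)\bigr].\]
The first two summands vanish because $\theta(0)=0$ and, by Proposition \ref{prop - differential second order terms}, $D_0\theta=0$; the last two summands vanish because $A(0)=B(0)=0$. Hence $D_0\delta=0$ and therefore $D_0\p_3=0$.

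The only potential obstacle is the fact that differentiating the Hessian term $\theta$ is nontrivial in general, but this has already been handled in Proposition \ref{prop - differential second order terms}. Note also that the differentials of $A$ and $B$ are never actually needed since they are multiplied by quantities vanishing at $f=0$; this is why one does not have to compute $D_0(\vec\nabla_\bullet\beta_\bullet)$, which would otherwise be the most involved ingredient.
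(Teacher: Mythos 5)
Your proof is correct and follows essentially the same route as the paper: decompose $\p_3(f)=3\beta_f\delta(f)$ with $\delta(f)=\beta_f\theta(f)+\frac{1}{2}\zeta_f\e(f)$, use $\theta(0)=0$ and $D_0\theta=0$ from Proposition \ref{prop - differential second order terms}, observe that the remaining factors vanish at $f=0$ because $\vec\nabla_0 0=0$, and conclude by the Leibniz rule. The only (immaterial) difference is that you exploit $\zeta(0)=\e(0)=0$ to avoid differentiating either factor, whereas the paper notes instead that every term of $D_0\zeta$ carries a factor $\vec\nabla_f f$; both observations yield the same conclusion.
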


\begin{proof}
Recall that $\p_3(f)=3\beta_f\delta(f)$ where  $\delta(f)=\beta_f\theta(f)+\frac{1}{2}\zeta_f\e(f)$. The map $\theta:f\mapsto \Hess_f(f)\left(\vec\nabla_f f,\vec\nabla_f f\right)$ was differentiated in Proposition \ref{prop - differential second order terms} where we found $D_0\theta=0$. We also have $\theta(0)=0$ from its expression.

  The function $\zeta_f=g_f\left(\vec\nabla_f f,\vec\nabla_f f\right)$ vanishes at $0$, and all the terms in its differential involve a product with $\nabla_ff$, so $D_0\delta=0$. But we also have $\delta(0)=0$, so $D_0\p_3=0$.
\end{proof}

We now have the full contribution of $\p$ to $D\Gamma$.

\begin{prop} \label{prop - differential phi}
The differential of $\p$ at $f=0$ satisfies, for $u,v\in\cC^\infty(\Sigma)$:
\begin{align*}
D_0\p(u)v-D_0\p(v)u&=2g_0\left(\vec\nabla_0\beta_0,v\vec\nabla_0 u-u\vec\nabla_0 v\right) +\beta_0\left[v\Delta_0 u-u\Delta_0 v\right]
\end{align*}
\end{prop}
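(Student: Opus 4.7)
The plan is to combine the three preceding lemmas using the product rule. Starting from the decomposition established in Theorem \ref{thm - connexion de Levi-Civita},
\[ \p(f)=\p_1(f)+F_f^2\,\p_2(f)+F_f^4\,\p_3(f),\]
I would apply the Leibniz rule at $f=0$ to obtain, for any $u\in\cC^\infty(\Sigma)$,
\[ D_0\p(u)=D_0\p_1(u)+2F_0\,D_0F_\bullet(u)\,\p_2(0)+F_0^2\,D_0\p_2(u)+4F_0^3\,D_0F_\bullet(u)\,\p_3(0)+F_0^4\,D_0\p_3(u).\]

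The next step is to kill the $F$-derivative terms. From the expression $F_f(x)=F(x,\vec\nabla_ff(x),f(x))$ together with Proposition \ref{prop - differential of F}, one reads off $F_0=1$ and $D_0F_\bullet=0$ (since the formula for $D_fF_\bullet(u)$ carries a global factor of $\beta_fF_f^3$ paired with $\vec\nabla_f f$, both of which force the contribution at $f=0$ to vanish once one sees that the coefficient $\xi(f)$ likewise involves $\vec\nabla_f f$). Consequently
\[ D_0\p(u)=D_0\p_1(u)+D_0\p_2(u)+D_0\p_3(u). \]

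Antisymmetrizing in $u,v$ and invoking Lemma \ref{lem - differential phi_1}, Lemma \ref{lem - differential phi_2}, and Lemma \ref{lem - differential phi_3} in turn gives the three contributions $0$, the claimed expression, and $0$. Adding these yields exactly
\[ D_0\p(u)v-D_0\p(v)u=2g_0\!\left(\vec\nabla_0\beta_0,\,v\vec\nabla_0u-u\vec\nabla_0v\right)+\beta_0\bigl[v\Delta_0u-u\Delta_0v\bigr]. \]

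There is no genuine obstacle here: the entire statement is an assembly lemma whose purpose is to package the three differentials computed above into a single formula suitable for plugging into the curvature identity \eqref{eqn - differential connection form}. The only care required is to verify that the factors $F_f^2$ and $F_f^4$ do not contaminate the differentiation at $f=0$, which is precisely the content of the observation $D_0F_\bullet=0$, $F_0=1$.
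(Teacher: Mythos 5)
Your proposal is correct and follows essentially the same route as the paper: expand $\p=\p_1+F^2\p_2+F^4\p_3$, use $F_0=1$ and $D_0F_\bullet=0$ (which, as you note, follows from Proposition \ref{prop - differential of F} because every term in the bracket contains a factor $\vec\nabla_f f$, vanishing at $f=0$), and then antisymmetrize using Lemmas \ref{lem - differential phi_1}, \ref{lem - differential phi_2} and \ref{lem - differential phi_3}. The only cosmetic remark is that the vanishing of $D_0F_\bullet$ is due to the $\vec\nabla_f f$ factors inside the bracket (not the prefactor $\beta_fF_f^3$), which your argument in fact uses correctly.
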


\begin{proof}
This is a consequence of the expression \[\p(f)=\p_1(f)+F_f^2 \p_2(f)+F_f^4\p_3(f)\] and of the fact that $D_0F_\bullet =0$ (Proposition \ref{prop - differential of F}) and $F_0=1$ as well as of Lemma \ref{lem - differential phi_1}, Lemma \ref{lem - differential phi_2} and Lemma \ref{lem - differential phi_3}.
\end{proof}

\subsection{The curvature tensor of type (3,1)}
We can now compute the Riemann curvature tensor of $\Cauchy(M,g)$ explicitly.

\begin{theorem} \label{thm - (3,1) curvature tensor}
The Riemann curvature tensor $R$ of type $(3,1)$ of the $L^2$-metric on $\Cauchy(M,g)$ at $S\in \Cauchy(M,g)$  is given by, for $u,v,w\in \cC^\infty(S)$:

\begin{align*} 
R_S(u,v)w &= g\left(v\vec\nabla u-u\vec\nabla v, \vec\nabla  w\right) \\
&\hspace{1cm} + \frac{1}{2} \left[v\Delta u-u\Delta v\right] w 
\end{align*}
where the gradient $\vec\nabla$ and the Laplacian $\Delta$ are considered with respect to the induced Riemannian metric on $S$.
\end{theorem}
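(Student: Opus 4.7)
The plan is to compute the curvature via the decomposition $R = D\Gamma + \Gamma\wedge\Gamma$ at the origin $f=0$ of the graph chart. First I observe that $\psi(0) = 0$: indeed $\psi(f) = \beta_f F_f^2 \vec\nabla_f f$ carries a factor $\vec\nabla_f f$ that vanishes at $f=0$. Consequently $\Gamma_0(u,v) = \tfrac{1}{2}\p(0)uv$ reduces to pointwise multiplication by the scalar function $\tfrac{1}{2}\p(0)$, so $\Gamma_0(u,\Gamma_0(v,w)) = \tfrac{1}{4}\p(0)^2 uvw$ is symmetric in $u \leftrightarrow v$ and the quadratic term $\Gamma\wedge\Gamma$ vanishes at $f=0$. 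Hence $R_0 = D\Gamma|_0$.

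For the exterior derivative I would apply identity \ref{eqn - differential connection form}: at $f=0$ the two summands carrying a $\psi(0)$ prefactor disappear, leaving
\[ D_0(\Gamma_\bullet(v,w))(u) = \tfrac{1}{2}D_0\p(u)\,vw + g_0\bigl(D_0\psi(u),\vec\nabla_0(vw)\bigr). \]
Substituting $D_0\psi(u) = \beta_0\, \vec\nabla_0 u$ from Lemma \ref{lem - differential psi} and the antisymmetrized combination $D_0\p(u)v - D_0\p(v)u$ from Proposition \ref{prop - differential phi}, and expanding $\vec\nabla_0(vw) = v\vec\nabla_0 w + w\vec\nabla_0 v$ so that the symmetric cross-term $w\, g_0(\vec\nabla_0 u, \vec\nabla_0 v)$ cancels in the antisymmetrization, I arrive at the chart-level expression
\[ R_0(u,v)w = g_0(\vec\nabla_0\beta_0, v\vec\nabla_0 u - u\vec\nabla_0 v)w + \tfrac{\beta_0}{2}(v\Delta_0 u - u\Delta_0 v)w + \beta_0\, g_0(v\vec\nabla_0 u - u\vec\nabla_0 v,\vec\nabla_0 w). \]

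The remaining step is to translate this $(3,1)$-tensor from the chart identification $T_0\cU = \cC^\infty(\Sigma)$ to the intrinsic identification $T_S\Cauchy(M,g) = \cC^\infty(S)$ via the future-directed unit normal. Lemma \ref{lem - differential graph map} at $f=0$ gives $D_0\Gr(u) = \sqrt{\beta_0}\,u$, so writing $\tilde u = \sqrt{\beta_0}\,u$ (and likewise $\tilde v,\tilde w$) naturality of $R$ yields $R_S(\tilde u,\tilde v)\tilde w = \sqrt{\beta_0}\,R_0(u,v)w$. Using the identities $v\vec\nabla_0 u - u\vec\nabla_0 v = \tfrac{1}{\beta_0}(\tilde v\vec\nabla_0 \tilde u - \tilde u \vec\nabla_0 \tilde v)$ and $\vec\nabla_0\beta_0 = 2\sqrt{\beta_0}\,\vec\nabla_0\sqrt{\beta_0}$, one checks that the chain-rule contributions arising when expanding $v\Delta_0 u - u\Delta_0 v$ and $\vec\nabla_0 w$ in the new variables absorb exactly the first summand of $R_0$, and the remaining $\vec\nabla_0\sqrt{\beta_0}$ terms telescope to zero. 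Since the induced Riemannian metric on $S$ is precisely $g_0$, what survives is the stated formula.

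I expect this last paragraph to be the main obstacle, as it involves tracking several competing chain-rule terms whose mutual cancellation is not entirely transparent. A tidier alternative is to arrange the splitting so that $\beta(\cdot,0) \equiv 1$ in a tubular neighborhood of $S$ --- for instance via the normal exponential flow from $S$ together with the Gauss lemma --- in which case $D_0\Gr$ is already the identity on $\cC^\infty(\Sigma) = \cC^\infty(S)$ and all $\beta_0$-dependent contributions in $R_0$ either vanish or reduce to the intrinsic terms, yielding the formula at once.
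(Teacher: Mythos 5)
Your proposal is correct and follows essentially the same route as the paper's proof: $R=D\Gamma+\Gamma\wedge\Gamma$ at $f=0$, with $\psi(0)=0$ killing the wedge term and two summands of \eqref{eqn - differential connection form}, then Lemma \ref{lem - differential psi} and Proposition \ref{prop - differential phi} giving the chart-level formula, and finally the identification $D_0\Gr(u)=\beta_0^{1/2}u$ from Lemma \ref{lem - differential graph map}. The cancellation you describe in that last rescaling step does check out (it is exactly what the paper leaves implicit in its one-line conclusion), and your alternative normalization $\beta\equiv 1$ via a Gaussian normal splitting near $S$ is a legitimate shortcut for that same step, since the curvature at $S$ only depends on the metric on a spatially compact neighborhood of $S$.
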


\begin{proof} Write $R=D\Gamma+\Gamma\wedge\Gamma$, and start with $D\Gamma$. From \eqref{eqn - differential connection form} we find:

\begin{align} \label{eqn - exterior derivative connection form part 1}
D\Gamma_0(u,v)w &= \frac{1}{2}\left(\rule{0cm}{0.4cm} D_0\p(u)v-D_0\p(v)u\right)w \\
&\hspace{1cm} + \left[g_0\left( D_0\psi(u),\vec\nabla_0v\right)-g_0\left( D_0\psi(v),\vec\nabla_0u\right) \right] w \nonumber \\
&\hspace{1cm} + g_0\left(D_0\psi(u)v-D_0\psi(v)u, \vec\nabla_0w \right)\nonumber
\end{align}

In Lemma \ref{lem - differential psi} we saw that $D_0\psi(u)=\beta_0\vec\nabla_0 u$, so the second term in \eqref{eqn - exterior derivative connection form part 1} vanishes.  The differential of $\p$ comes from Proposition \ref{prop - differential phi}, and we find:

\begin{align} \label{eqn - exterior derivative connection form part 2}
D\Gamma_0(u,v)w &= g_0\left(\vec\nabla_0\beta_0,v\vec\nabla_0 u-u\vec\nabla_0 v\right)w \nonumber\\
&\hspace{1cm} + \frac{1}{2} \beta_0\left[v\Delta_0 u-u\Delta_0 v\right] w \nonumber\\
&\hspace{1cm} +\beta_0 g_0\left(v\vec\nabla_0 u-u\vec\nabla_0 v, \vec\nabla_0w \right)\nonumber\\
&= g_0\left(v\vec\nabla_0 u-u\vec\nabla_0 v, \vec\nabla_0\left[\beta_0w\right]\right) \\
&\hspace{1cm} + \frac{1}{2} \left[v\Delta_0 u-u\Delta_0 v\right]\beta_0 w \nonumber
\end{align}

We now have to compute the wedge product $\Gamma\wedge \Gamma$, whose expression is:
\[(\Gamma\wedge\Gamma)_f(u,v)w=\Gamma_f(u,\Gamma_f(v,w))-\Gamma_f(v,\Gamma_f(u,w)) \]
Recall the expression of $\Gamma$ from Theorem \ref{thm - connexion de Levi-Civita}:
\[ \Gamma_f(u,v)=\frac{1}{2}\p(f)uv+g_f\left(\psi(f),\vec\nabla_f(uv)\right) \]
Since $\psi(0)=0$, we get:
\[ \Gamma_0(u,v)=\frac{1}{2}\p(0)uv \]
There is no need to compute $\p(0)$, since $\Gamma_0(u,\Gamma_f(v,w))=\frac{1}{4}\p(0)^2uvw$ leads directly to $(\Gamma\wedge\Gamma)_0=0$. So the expression of the Riemann tensor in coordinates follows from \eqref{eqn - exterior derivative connection form part 2}:

\begin{align*} 
R_0(u,v)w &= g\left(v\vec\nabla_0 u-u\vec\nabla_0 v, \vec\nabla_0 \left[\beta_0  w\right]\right) \\
&\hspace{1cm} + \frac{1}{2} \left[v\Delta_0 u-u\Delta_0 v\right] \beta_0 w 
\end{align*}
The coordinate free expression comes from the differential of the graph map given in Lemma \ref{lem - differential graph map}:
\[ D_0\Gr(u)=\beta_0^{\frac{1}{2}}u.\]
\end{proof}

\subsection{Sectional curvature}

We now have all the tools to prove Theorem \ref{mainthm_curvature}. From the formula for the differential of the graph map, we see that the expression in coordinates should be:

\[ \cR_0(u,v,v,u) = -\frac{1}{2}\int_\Sigma\beta_0^2 \Vert u\vec\nabla_0 v-v\vec\nabla_0 u\Vert^2_{g_0} \dvol_{g_0}. \]

\begin{proof}[Proof of Theorem \ref{mainthm_curvature}]
Since $F_0=1$, we find by using Theorem \ref{thm - (3,1) curvature tensor}:
\begin{align} \label{eqn - (4,0) curvature tensor part 1}
\cR_0(u,v,v,u)&= G_0\left(\rule{0cm}{0.4cm} R_0(u,v)v,u\right)\\
&=\int_\Sigma\beta_0g_0\left(v\vec\nabla_0 u-u\vec\nabla_0 v, \vec\nabla_0\left[\beta_0v\right]\right)u\dvol_{g_0}\nonumber \\
&\hspace{1cm} + \frac{1}{2}\int_\Sigma\beta_0^2 \left[v\Delta_0 u-u\Delta_0 v\right]uv \dvol_{g_0} \nonumber
\end{align}

Note that $v\Delta_0 u-u\Delta_0 v=\Div_0\left(v\vec\nabla_0 u-u\vec\nabla_0 v\right)$, so we can apply Green's formula to the second term:

\begin{align*}
\int_\Sigma\beta_0^2 \left[v\Delta_0 u-u\Delta_0 v\right]uv \dvol_{g_0} &= -\int_\Sigma g_0\left(v\vec\nabla_0 u-u\vec\nabla_0 v, \vec\nabla_0\left[\beta_0^2uv\right]\right) \dvol_{g_0}
\end{align*}
Let us write $\beta_0^2uv$ as $\left(\beta_0u\right)\left(\beta_0v\right)$ when developing its gradient:

\begin{align}\label{eqn - IPP2}
\int_\Sigma\beta_0^2 \left[v\Delta_0 u-u\Delta_0 v\right]uv \dvol_{g_0} &= -\int_\Sigma\beta_0 g_0\left(v\vec\nabla_0 u-u\vec\nabla_0 v, \vec\nabla_0\left[\beta_0u\right]\right)v \dvol_{g_0}\nonumber\\
&\hspace{0.4cm} - \int_\Sigma\beta_0 g_0\left(v\vec\nabla_0 u-u\vec\nabla_0 v, \vec\nabla_0\left[\beta_0v\right]\right)u \dvol_{g_0} 
\end{align}

We can reinject \eqref{eqn - IPP2} back into \eqref{eqn - (4,0) curvature tensor part 1}.
\begin{align*}
\cR_0(u,v,v,u)&= -\frac{1}{2}\int_\Sigma\beta_0g_0\left(v\vec\nabla_0 u-u\vec\nabla_0 v,v \vec\nabla_0\left[\beta_0u\right]-u \vec\nabla_0\left[\beta_0v\right]\right)u\dvol_{g_0}\\
&= -\frac{1}{2}\int_\Sigma \beta_0^2 g_0\left(v\vec\nabla_0 u-u\vec\nabla_0 v,v\vec\nabla_0 u-u\vec\nabla_0 v\right)\dvol_{g_0}
\end{align*}

\end{proof}

\vspace{0.5cm}
 {\footnotesize \emph{E-mail address:}  daniel.monclair@universite-paris-saclay.fr
 
  \textsc{Université Paris-Saclay, CNRS, Laboratoire de mathématiques d’Orsay, 91405, Orsay, France}}

\end{document}